\newcommand{\E}{\mathbb{E}}
\newcommand{\R}{\mathbb{R}}
\newcommand{\PP}{\mathbb{P}}
\newcommand{\equa}{\begin{eqnarray*}}
\newcommand{\tion}{\end{eqnarray*}}
\newcommand{\equal}{\begin{eqnarray}}
\newcommand{\tionl}{\end{eqnarray}}
\theoremstyle{plain}
\newtheorem{thm}{Theorem}[section]
\newtheorem{lemma}[thm]{Lemma}
\newtheorem{prop}[thm]{Proposition}
\theoremstyle{definition}
\newtheorem{defn}[thm]{Definition}
\newtheorem{rem}[thm]{Remark}
\definecolor{color12}{rgb}{0.4, 0.8,0.4}
\newcommand{\ch}{}
\title{Existence, Uniqueness and Comparison Results for BSDEs with L\'evy Jumps in an Extended Monotonic Generator Setting}
\author{ Christel Geiss$^1$   \hspace{1.0em}   Alexander Steinicke$^2$\\\\
\small  
}
\date{}
\begin{document}
\parindent 0pt

\maketitle
\begin{abstract}
We show that the comparison results for a backward SDE  with jumps established in Royer (2006) and  Yin and Mao (2008) hold under more simplified conditions.
Moreover, we prove existence and uniqueness 
allowing the coefficients in the linear growth- and  monotonicity-condition for the generator  to be random and time-dependent. In the $L^2$-case with linear growth, this also generalizes  the results 
of Kruse and Popier (2016). 
For the proof of the comparison result, we introduce an approximation technique: Given a BSDE  driven by Brownian motion and Poisson random measure, we approximate  it by BSDEs where the  Poisson random measure admits only jumps of  size larger than $1/n.$ 
\end{abstract}

\vspace{1em}
{\noindent \textit{Keywords:} Backward stochastic differential equation; L\'evy process; comparison theorem; existence and uniqueness\\
\noindent \textit{Mathematics Subject Classification:} 60H10
}
{\noindent
\footnotetext[1]{University of Jyvaskyla, Department of Mathematics and Statistics, P.O. Box 35,\\ \hspace*{1.5em}
 FI-40014 University of Jyvaskyla. \\ \hspace*{1.5em}
 {\tt christel.geiss{\rm@}jyu.fi}}
\footnotetext[2]{Department of Mathematics and Information Technology, Montanuniversitaet Leoben, Austria. \\ \hspace*{1.5em}  {\tt alexander.steinicke{\rm@}unileoben.ac.at}}}

\section{Introduction}
In this paper, we study backward stochastic differential equations (BSDEs) of the form

\begin{equation}\label{bsde0}
Y_t=\xi+\int_t^T f(s,Y_s,Z_s,U_s)ds-\int_t^T Z_s dW_s-\int_{{]t,T]}\times(\R\setminus\{0\})}U_s(x)\tilde{N}(ds,dx),
\end{equation}
where  $W$ denotes   a one-dimensional Brownian motion  and $\tilde{N}$ a compensated Poisson random measure
belonging to  a given  L\'evy process with L\'evy measure $\nu$.  In particular, our focus lies on comparison results and existence and uniqueness of solutions. 

\bigskip

Comparison theorems state that---under certain conditions---if $\xi\le \xi'$ and $f\le f'$, then the process $Y$ of the solution satisfies $Y_t\le Y_t'$ for all $t\in{[0,T]}$. 
These types of theorems in the case of one-dimensional, Brownian BSDEs 
has been treated by Peng \cite{peng}, El Karoui et al. \cite{elkaroui}, 
\cite{carmona} and Cao and Yan \cite{caoyan}. 

\bigskip
Barles et al. give  in  \cite[Remark 2.7]{bbp}  a counterexample which shows  that in the jump case the
conditions $\xi \le \xi'$ and $f \le f'$ are not sufficient to guarantee $Y \le Y'$. They propose an additional 
sufficient condition which has been generalized 
 by Kruse and Popier \cite{Kruse}  , Royer \cite{royer}, Yin and Mao \cite{YinMao}, Becherer et al.~\cite{Bech} (allowing more general jump processes), and  Cohen et al. \cite{cohen} (for BSDEs driven by martingales). 
The condition of Kruse and Popier \cite{Kruse}  reads (in our $L^2$-setting) as follows: for each $s,y,z,u, u' \in [0,T]\times \R  \times  \R \times L^2(\nu)\times  L^2(\nu)$
there is a progressively measurable process $\gamma^{y,z,u,u'}\colon\Omega\times{[0,T]}\times\R\setminus\{0\}\to\R$  such that
\equal \label{Agamr}
&& f(s,y,z,u)-f(s,y,z,u')\leq \int_{\R\setminus\{0\}}(u(x)-u'(x))\gamma_s^{y,z,u,u'}(x)\nu(dx), \notag \\
&& -1 \leq \gamma^{y,z,u,u'}_s(x) \quad \text{and} \quad \sup_{s,\omega, y,z,u,u'}  |\gamma^{y,z,u,u'}_s|  \in L^2(\nu). 
\tionl

\bigskip 
One of the main results in the present paper is Theorem \ref{comparison} which states that  \eqref{Agamr}  can be replaced by the  simpler  condition
{\ch \equal \label{Agam-new} 
&&f(s,y,z,u)- f(s,y,z,u')\leq \int_{\R\setminus\{0\}}(u'(x)-u(x))\nu(dx), \quad \mathbb{P}\otimes\lambda\text{-a.e.} \notag \\
&&\text{for all} \,\, u, u'\in L^2(\nu) \, \text{ with } \,  u\leq u'. 
\tionl }
Notice that the r.h.s.~is infinite for $u'(x)-u(x) \notin L^1(\nu).$ 
Clearly,  \eqref{Agam-new}  is a weaker condition than \eqref{Agamr}, because one only needs to check the inequality for those $u, u'\in L^2(\nu)$ for 
which $u\le u'$ holds. Moreover, we do not need any $L^2(\nu)$ condition for $\gamma^{y,z,u,u'}_s$ but we choose $\gamma^{y,z,u,u'}_s(x)=-1.$ {\ch Under the 
constraint $ -1 \leq \gamma^{y,z,u,u'}_s(x),$ the choice  $\gamma^{y,z,u,u'}_s(x)=-1$ yields for $u'-u \ge 0$ the largest possible expression on the r.h.s.~of  \eqref{Agamr}, so that  \eqref{Agam-new}  can be seen as  the weakest possible condition which \eqref{Agamr} could impose on $f.$} \\
For a finite  L\'evy measure $\nu,$   Theorem \ref{comparison}  can be shown using only  elementary means. \smallskip 

Another  main result is a  method of how to  approximate a BSDE driven by a  L\'evy process with an infinite measure $\nu,$   by a sequence of BSDEs where the driving  processes  have a   finite  L\'evy measure.  We apply this result  to show   the comparison theorem for  BSDEs  driven by a general   L\'evy process. 
The proof relies on the Jankov--von Neumann theorem on  measurable sections/uniformizations  (this theorem is also important for dynamic programming, see   
El Karoui and Tan \cite{alkaroui-jankov}).  
Under certain conditions on the generator, the approximating solutions can 
be interpreted as nonlinear conditional expectations (in the sense of Peng \cite{pengII}), conditioned on a L\'evy process whose jumps are not of arbitrarily small size. 
(See the comments after Theorem \ref{finlev}.)

Studying the   existence, uniqueness, and comparison results by   Darling and Pardoux \cite{darlingpardoux}, 
Pardoux and Zhang \cite{pardouxzhang}, Pardoux \cite{pardoux}, Fan and Jiang \cite{fanjiang}, Royer \cite{royer}, Situ \cite{situ}, Yin and Mao \cite{YinMao}, Kruse and Popier 
\cite{Kruse}, \cite{Kruse17},  Yao \cite{Yao}, and  Sow \cite{bamba}, one notices that one can  unify and generalize  the assumptions on $f.$  \\
Indeed,  and this is our third main result, in  the case of $L^2$-solutions, for a  progressively measurable generator $f$ with linear growth,
it suffices to assume  (cf. Theorem  \ref{existence}     and  \ref{comparison}) the following growth- and monotonicity conditions with time-dependent, random coefficients: 
\begin{itemize}
\item $|f(\omega,s,y,z,u)|\leq F(s, \omega)+K_1(s,\omega )|y|+K_2(s,\omega)(|z|+\|u\|)$,
\item $(y-y')(f_1(\omega,s,y,z,u)-f_1(\omega,s,y',z',u'))\\
\hspace*{6em}\leq \alpha(s)\rho(|y-y'|^2)+\beta(s,\omega)|y-y'|(|z-z'|+\|u-u'\|)$,
\end{itemize}
with $\alpha\in L^1([0,T])$ and $F$ being nonnegative and progressively measurable such that  $\E\left [ \left ( \int_0^T F(\omega, t)dt \right )^2\right ] < \infty.$
 The     processes $K_1, K_2,$ and $\beta$ are  nonnegative and progressively measurable such that for a constant $c>0$,
$$\int_0^T(K_1(s)+K_2(s)^2+\beta(s)^2)ds <c,\quad\mathbb{P}\text{-a.s.}$$
The concave function $\rho$  in the monotonicity condition may  grow faster than linear at zero and satisfies $\int_{0^+} 1/\rho(x)dx=\infty.$ This type of function already appeared in context with BSDEs in Mao \cite{mao} in 1997. 

These assumptions also extend  the monotonicity condition of \cite{Kruse}, \cite{Kruse17}, for the $L^2$-case with linear growth, since the coefficients in our setting take randomness, 
the function $\rho$ and time-dependence into account.  BSDEs  with time-dependent coefficients appear, for example, in Gobet and Turkedjiev \cite{GobetTurk}. \bigskip

The existence and uniqueness result Theorem \ref{existence} and the comparison result Theorem \ref{comparison} are basic tools in the forthcoming paper 
\cite{GeissSteinIII}  on Malliavin differentiability  and boundedness of solutions to BSDEs.  To compute the  Malliavin  derivative for the jump part
of the L\'evy process,  more structure from the generator is required in its dependency on $u,$ usually via an integral w.r.t.~$\nu(dx),$ for example, 
 $$
 f(s,u) =    h  \Big(s,  \int_{\R\setminus\{0\}} u(x) \kappa(s,x)  \nu(dx) \Big),
   $$
 where $[0,T] \times \R \ni (s,v) \mapsto  h(s,v).$ 
 One can find $h$ and $\kappa$ such that the assumptions of Theorem  \ref{comparison} are satisfied while  conditon \eqref{Agamr} does not hold: 
  By the mean value theorem there exists a $\zeta \in ]0,1[$  and
 $$v_\zeta:=  \int_{\R\setminus\{0\}}  (\zeta  u(x)  + (1-\zeta) u'(x) ) \kappa(s,x)  \nu(dx),$$
 such that 
 \equa  f(s, u) - f(s, u') &=& \partial_v  h ( s, v_\zeta)   
  \int_{\R\setminus\{0\}}  (u(x)  - u'(x) ) \kappa(s,x)  \nu(dx).
 \tion
Assumption \eqref{Agam-new} holds
if  $ \gamma_s^{u,u'}(x):= \partial_v  h (s,v_\zeta)  \kappa(s,x) \ge -1$ for all $(s,u,u',x).$ {\ch  Choosing, for example, 
a bounded  function $h$ such that also $ \sup_{s,v} |\partial_v  h(s,v)|<\infty,$  but  $\partial_v  h(s,v) \neq 0$  for  a.e.~$s$ and $v,$ and  putting   $\kappa(s,x) = s^{-\frac{1}{4}} (|x|\wedge 1),$ then 
\eqref{Agamr} does not hold since
$$\sup_{s, u,u'} |\gamma_s^{u,u'} | \notin  L^2(\nu).$$ 
However,  the  assumptions \ref{2}, \ref{3} of section \ref{sec3} are  satisfied for }
$$K_2( s) = \beta(s) = \sup_v | \partial_v h  (s,v)| \|\kappa(s,\cdot)\|_{L^2(\nu)} \le c s^{-\frac{1}{4}}.$$

\bigskip
The paper is structured as follows: Section \ref{sec2} contains preliminaries and basic definitions. In section \ref{sec3}, we present the 
main theorems of this paper about existence and uniqueness of solutions, the approximation using BSDEs  based on L\'evy processes with finite L\'evy measure,  and the comparison result. The latter we also prove there.
Having  stated and proved  some auxiliary results in section \ref{sec4}, including an a-priori estimate for our type of BSDEs, we are able to prove 
 existence and uniqueness and the  approximation result from section \ref{sec3}.
 In the appendix, we recall the Bihari--LaSalle inequality and the Jankov--von Neumann theorem. 
 
\section{Setting}\label{sec2}

Let $X=\left(X_t\right)_{t\in{[0,T]}}$ be a c\`adl\`ag L\'evy process on a complete probability space $(\Omega,\mathcal{F},\mathbb{P})$
with L\'evy measure $\nu$. We will denote the augmented natural filtration of $X$ by
$\left({\mathcal{F}_t}\right)_{t\in{[0,T]}}$ and assume that $\mathcal{F}=\mathcal{F}_T.$  For $0<p\leq \infty$ we use the notation $(L^p,\|\cdot\|_p):=\left(L^p(\Omega,\mathcal{F},\mathbb{P}),\|\cdot\|_{L^p}\right)$.
Equations or inequalities for objects of these spaces throughout the paper are considered up to $\mathbb{P}$-null sets. \smallskip \\
The L\'evy--It\^o decomposition of a L\'evy process $X$ can be written as
\begin{equation}\label{LevyIto}
X_t = a t + \sigma W_t   +  \int_{{]0,t]}\times \{ |x|\le1\}} x\tilde{N}(ds,dx) +  \int_{{]0,t]}\times \{ |x|> 1\}} x  N(ds,dx),
\end{equation}
where $a\in\R$, $\sigma\geq 0$, $W$ is a Brownian motion and $N$ ($\tilde N$) is the (compensated) Poisson random measure corresponding to $X$, see \cite{applebaum} or \cite{satou}.\bigskip

{\bf Notation }
\begin{itemize}
\item Let  $\mathcal{S}^2$ denote the  space of all $(\mathcal{F}_t)$-progressively measurable and c\`adl\`ag processes  $Y\colon\Omega\times{[0,T]} \rightarrow \R$ such that
\equa
\left\|Y\right\|^2_{\mathcal{S}^2}:=\E\sup_{0\leq t\leq T} \left|Y_{t}\right|^2  <\infty.
\tion

\item We define $L^2(W) $ as the space of all $(\mathcal{F}_t)$-progressively measurable processes $Z\colon \Omega\times{[0,T]}\rightarrow \R$  such that
\equa
\left\|Z\right\|_{L^2(W) }^2:=\E\int_0^T\left|Z_s\right|^2 ds<\infty.
\tion

\item Let $\R_0:= \R\!\setminus\!\{0\}$. We define $L^2(\tilde N)$ as the space of all random fields $U\colon \Omega\times{[0,T]}\times{\R_0}\rightarrow \R$ 
which are measurable with respect to
$\mathcal{P}\otimes\mathcal{B}(\R_0)$ (where $\mathcal{P}$ denotes the predictable $\sigma$-algebra on $\Omega\times[0,T]$ generated
by the left-continuous $(\mathcal{F}_t)$-adapted processes) such that
\equa
\left\|U\right\|_{L^2(\tilde N) }^2:=\E\int_{{[0,T]}\times{\R_0}}\left|U_s(x)\right|^2 ds\,\nu(dx)<\infty.
\tion

\item $L^2(\nu):= L^2(\R_0, \mathcal{B}(\R_0), \nu),$ $\|\cdot \|:=\|\cdot \|_{L^2(\nu)}.$

\item { $L^p([0,T]):=L^p([0,T],\mathcal{B}([0,T]), \lambda)$ for $p>0$, where $\lambda$ is the Lebesgue measure on ${[0,T]}$.}

\item With a slight abuse of the notation, we define
\equal \label{LtwoLone}
&& \hspace{-3em} L^2(\Omega; L^1([0,T])) \\ \hspace{-5em}& \hspace{-3em}:=&\hspace{-1em}\!\!\!\!\! \left  \{F \in L^0(\Omega \times [0, T], \mathcal{F} \otimes \mathcal{B}([0,T]), \mathbb{P} \otimes \lambda):  \E \!\left[\int_0^T \!\!|F(\omega, t)| dt\right]^2\!\! <\! \infty.  \right \}  \notag
\tionl
For $F \in  L^2(\Omega; L^1([0,T])),$ put
\equal \label{I-FandK-F} 
I_F(\omega):= \int_0^T F(\omega, t)dt \quad \text{ and } \quad K_F(\omega, s) := \frac{F(\omega, s)}{I_F(\omega)}.
\tionl

\item A {\bf solution to a BSDE} with terminal condition $\xi$ and generator $f$ is a triplet $(Y,Z,U)\in \mathcal{S}^2\times L^2(W)\times L^2(\tilde N)$ which satisfies for all $t\in{[0,T]}$:
\begin{equation}\label{bsde}
Y_t=\xi+\int_t^T f(s,Y_s,Z_s,U_s)ds-\int_t^T Z_s dW_s-\int_{{]t,T]}\times\R_0}U_s(x)\tilde{N}(ds,dx).
\end{equation}

The BSDE \eqref{bsde} itself will be denoted by $(\xi,f)$.
\end{itemize}
\section{Main Results}\label{sec3}

We start with a result about existence and uniqueness  which is proved in section \ref{sec5}.

\begin{thm}\label{existence}
There exists a unique solution to the BSDE $(\xi,f)$ with $\xi\in L^2$ and generator $f:\Omega\times{[0,T]}\times\mathbb{R}\times\mathbb{R}\times L^2(\nu)\to\mathbb{R}$
satisfying the properties
\begin{enumerate}[label={(A\,\arabic*)}]
\item\label{1} For all $(y,z,u): (\omega,s)\mapsto f(\omega,s,y,z,u)$ is progressively measurable.

\item\label{2} There are nonnegative, progressively measurable processes $K_1,  K_2,$ and $F$ with 
\equal \label{C-K}
C_K:= \left \|\int_0^T\left(K_1(\cdot ,s)+K_2(\cdot,s)^2\right)ds \right \|_{\infty}<\infty
\tionl
 and   $F\in L^2(\Omega; L^1([0,T]))$  (see \eqref{LtwoLone}) such that for all $(y,z,u)$,
\begin{align*}
&|f(s,y,z,u)|\leq F(s)+K_1(s)|y|+K_2(s)(|z|+\|u\|), \quad \mathbb{P}\otimes\lambda\text{-a.e.}
 \end{align*}
\item\label{3} For $\lambda$-almost all $s$, the mapping $(y,z,u)\mapsto f(s,y,z,u)$ is $\mathbb{P}$-a.s. continuous. Moreover, there is a nonnegative function $\alpha\in L^1([0,T])$, $c>0$ and a progressively measurable process $\beta$ with $\int_0^T \beta(\omega,s)^2 ds<c$, $\mathbb{P}$-a.s. such that for all $(y,z,u), (y',z',u')$,
\begin{align*}
&(y-y')(f(s,y,z,u)-f(s,y',z',u'))\\
&\leq \alpha(s)\rho(|y-y'|^2)+\beta(s)|y-y'|(|z-z'|+\|u-u'\|), \mathbb{P}\otimes\lambda\text{-a.e.},
\end{align*}
where $\rho$ is a nondecreasing, continuous and concave function from ${[0,\infty[}$ to itself, satisfying $\rho(0)=0,$ and  $\int_{0^+}\frac{1}{\rho(x)}dx=\infty.$ 
 \item\label{4}  {\ch The function $\rho$ in \ref{3} satisfies
 $\limsup_{x \downarrow 0} \frac{\rho(x^2)}{x} =0.$}
\end{enumerate}
If $f$ satisfies only \ref{1}--\ref{3}, then there exists at most one solution.
\end{thm}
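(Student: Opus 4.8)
The plan is to prove the two assertions separately, since uniqueness requires only \ref{1}--\ref{3} while existence additionally exploits \ref{4}.

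\textbf{Uniqueness (under \ref{1}--\ref{3}).} Given two solutions $(Y,Z,U)$ and $(Y',Z',U')$, I would set $\delta Y=Y-Y'$, $\delta Z=Z-Z'$, $\delta U=U-U'$, $\delta f_s=f(s,Y_s,Z_s,U_s)-f(s,Y_s',Z_s',U_s')$, and apply the Itô formula to $\Gamma_t|\delta Y_t|^2$ with the bounded, absolutely continuous weight $\Gamma_s=\exp\!\big(2\int_0^s\beta_r^2\,dr\big)$, which is well defined and satisfies $1\le\Gamma_s\le e^{2c}$ by the bound on $\int_0^T\beta^2\,ds$ in \ref{3}. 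Since the terminal values agree and the stochastic integrals are true martingales (by the $\mathcal{S}^2\times L^2(W)\times L^2(\tilde N)$ membership of the solutions), taking expectations yields
\[
\E\big[\Gamma_t|\delta Y_t|^2\big]+\E\int_t^T\Gamma_s\big(|\delta Z_s|^2+\|\delta U_s\|^2\big)\,ds=\E\int_t^T\Gamma_s\big(2\,\delta Y_s\,\delta f_s-2\beta_s^2|\delta Y_s|^2\big)\,ds .
\]
Bounding $2\,\delta Y_s\,\delta f_s$ by the monotonicity inequality \ref{3} and using Young's inequality on the $\beta$-term, the $\Gamma_s(|\delta Z_s|^2+\|\delta U_s\|^2)$ and $2\beta_s^2|\delta Y_s|^2$ contributions cancel exactly, leaving $\E[\Gamma_t|\delta Y_t|^2]\le 2\int_t^T\alpha(s)\,\E[\Gamma_s\,\rho(|\delta Y_s|^2)]\,ds$. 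Dominating $\Gamma_s$ by $e^{2c}$ and pulling the expectation inside $\rho$ via Jensen's inequality and the concavity of $\rho$, I obtain the scalar inequality $v(t)\le 2e^{2c}\int_t^T\alpha(s)\,\rho(v(s))\,ds$ for $v(t):=\E|\delta Y_t|^2$, with $v(T)=0$. The Bihari--LaSalle inequality from the appendix, applicable because $\int_{0^+}1/\rho=\infty$, then forces $v\equiv 0$, hence $\delta Y=0$ and, back in the identity, $\delta Z=0$ and $\delta U=0$.

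\textbf{Existence (under \ref{1}--\ref{4}).} Here I would regularize $f$ into a sequence $f_n$ that is globally Lipschitz in $(y,z,u)$ while retaining the growth bound \ref{2} and a uniform version of the monotonicity \ref{3} with the same $\alpha,\beta,\rho$ (for instance by inf--sup convolution in $(y,z,u)$, using the $\mathbb{P}$-a.s. continuity from \ref{3}). Each BSDE $(\xi,f_n)$ then admits a unique solution $(Y^n,Z^n,U^n)$ by the classical contraction argument for Lipschitz generators with jumps, and the a-priori estimate of section \ref{sec4}, which depends on $f_n$ only through the growth coefficients in \ref{2}, bounds $\|Y^n\|_{\mathcal{S}^2}$, $\|Z^n\|_{L^2(W)}$, $\|U^n\|_{L^2(\tilde N)}$ uniformly in $n$. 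The heart of the argument is to show $(Y^n,Z^n,U^n)_n$ is Cauchy: applying Itô to $|Y^n-Y^m|^2$ and splitting $f_n(s,Y^n_s,Z^n_s,U^n_s)-f_m(s,Y^m_s,Z^m_s,U^m_s)$ into the genuinely monotone part (controlled by \ref{3}) plus the two approximation errors $f_n-f$ and $f-f_m$ (controlled using the uniform bounds), one is led again to a Bihari-type inequality, now for $\E\sup_{t}|Y^n_t-Y^m_t|^2$, whose inhomogeneous term vanishes as $n,m\to\infty$. Passing to the limit in $(\xi,f_n)$ via these $L^2$-convergences, the growth bound \ref{2} for dominated convergence, and the continuity of $f$ in \ref{3}, identifies the limit $(Y,Z,U)$ as a solution of $(\xi,f)$; uniqueness was already shown.

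The main obstacle is precisely this Cauchy estimate in the supremum norm $\mathcal{S}^2$. Unlike in the uniqueness step, where the integral inequality lives directly at the level of $\E|\delta Y_t|^2$, estimating $\E\sup_t|Y^n_t-Y^m_t|^2$ requires the Burkholder--Davis--Gundy inequality on the Brownian and jump martingales, which reintroduces $|\delta Y|(|\delta Z|+\|\delta U\|)$-type quantities under a square root and forces one to control $\rho(|\delta Y|^2)$ by a \emph{sublinear} function of $|\delta Y|$ in order to close the estimate. I expect condition \ref{4}, namely $\limsup_{x\downarrow 0}\rho(x^2)/x=0$, to enter exactly at this point: it guarantees that $\rho(|\delta Y|^2)=o(|\delta Y|)$ near the origin, which is what makes the supremum-level Bihari inequality convergent and thereby yields the Cauchy property. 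This explains the asymmetry in the hypotheses: the weaker Osgood condition $\int_{0^+}1/\rho=\infty$ suffices for uniqueness, but existence needs the additional quantitative control \ref{4}.
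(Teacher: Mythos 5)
Your uniqueness argument is essentially the paper's: it is Proposition \ref{continuitythm} specialized to $\xi=\xi'$, $f=f'$ (It\^o with an exponential weight built from $\beta^2$, the monotonicity condition, Jensen for the concave $\rho$, and the backward Bihari--LaSalle inequality). That half is fine.

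The existence half has two genuine gaps. First, the regularization you propose is not well defined. An inf--sup convolution with parameter $n$ in $(y,z,u)$ is finite only where the pointwise linear-growth coefficients are dominated by $n$; under \ref{2} one only controls $\int_0^T(K_1(s)+K_2(s)^2)\,ds$, so $K_1(\omega,s)$ and $K_2(\omega,s)$ may be unbounded in $(\omega,s)$ (the paper's own motivating example has $K_2(s)=c\,s^{-1/4}$), and on the set $\{K_1\vee K_2>n\}$ your $f_n$ equals $-\infty$. Any fix requires simultaneously truncating the coefficients $F,K_1,K_2$ and the arguments $z,u$ --- which is exactly what the paper does --- and then the approximation error $f^{(n)}-f$ is no longer controlled by local uniform convergence but is supported on the random set $\{|Z^n_s|>n\}\cup\{\|U^n_s\|>n\}\cup\{K_1\vee K_2\vee F>n\}$; making the corresponding error integrals small requires the uniform integrability of $(|Y^n|)$, $(|Z^n|+\|U^n\|)$ together with a dedicated estimate (Lemma \ref{lemma-eta-trick}) for terms like $\E\int_0^T\chi_n(s)K_1(s)|Y^n_s|^2ds$, since $K_1$ is only time-integrable and Cauchy--Schwarz alone does not close these. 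Your phrase ``controlled using the uniform bounds'' glosses over precisely this technical core.

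Second, you misidentify where \ref{4} enters, and your proposed use of it would not work as described. The paper never runs a Bihari inequality at the level of $\E\sup_t|\delta Y_t|^2$: the Cauchy property is established in $L^2(W)\times L^2(W)\times L^2(\tilde N)$ via Proposition \ref{continuitythm}, and membership in $\mathcal{S}^2$ is recovered afterwards by Proposition \ref{supprop}, which needs only \ref{1}--\ref{2}. The actual role of \ref{4} (see Remark \ref{remark}) is to upgrade the one-sided condition \ref{3} to genuine Lipschitz continuity in $(z,u)$ (divide the monotonicity inequality by $|y-y'|$ and let $y'\to y$, using $\rho(x^2)/x\to0$), which is what allows the cited existence theorem of Yin and Mao to be applied to the truncated generators. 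If your Lipschitz regularization really worked without \ref{4}, the theorem's hypotheses for existence would be weakened --- a sign that something is being assumed for free; here it is the finiteness and the preservation of \ref{2}--\ref{3} by the convolution that fail without the additional truncations.
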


\bigskip
For  $\rho(x) =x,$ we are in the case of  the  ordinary monotonicity condition.   Another  example for  a function $\rho$ is given by 
$$\rho(x)=1-\min\left(x,\tfrac{1}{e}\right)^{\min\left(x,\tfrac{1}{e}\right)},\quad x\geq 0.$$
\begin{rem} {\color{white}.} \label{remark}
\begin{enumerate}
\item      
Condition \ref{2} implies that  $f(s,y,z,u)$ is integrable for a.e.~$s \in [0,T]$ since, by Fubini's theorem,
\equal  \label{f-integrable}
\int_0^T&& \hspace{-2em} \E |f(s,y,z,u)| ds \notag \\
 &\le& \E \int_0^T [F(s) +  K_1(s)|y| +  K_2(s)(|z|+\|u\|))]ds  < \infty. 
\tionl
\item {\ch If  $\limsup_{x \downarrow 0} \frac{\rho(x^2)}{x} =0$ is satisfied one can derive Lipschitz continuity of $f(s,y,z,u)$ in $z$ and $u$ 
      from the  monotonicity condition in \ref{3}. We require  \ref{4} since we later want to apply  \cite[Theorem 2.1]{YinMao},
  where Lip\-schitz continuity in $u$ is used to show uniqueness of solutions. If only \ref{1}--\ref{3} are satisfied but not \ref{4}, and a Lipschitz condition in $z, u$ holds nevertheless, all of the article's theorems remain valid.
 One can show  that   \ref{4}  
  does not follow from the other conditions imposed on $\rho$ in \ref{3}: Assume a decreasing sequence $(x_n)_{n=0}^\infty$ with $x_0=1$ and   $\lim_{n \to \infty} x_n =0.$ Define 
  $$\rho(x):= \left \{ \begin{array}{ll}   \sqrt{x_n} &\text{if }\,\,\,  x=x_n, \, n=0,1,2,... \\
               \sqrt{x}  &\text{if }\,\,\,  x > 1 \text{ or } x=0.
               \end{array} \right .
$$ 
and let $\rho$ be continuous and piecewise linear on $]0,1].$ The so defined  $\rho$ is a concave function  with $\limsup_{x \downarrow 0} \frac{\rho(x)}{\sqrt{x}} =1.$
The  sequence $(x_n)_{n=0}^\infty$ can be constructed such that $\int_{0}^1\frac{1}{\rho(x)}dx=\infty.$ For example, choose $x_1$ such that 
 $\int_{x_1}^{1}\frac{1}{\rho(x)}dx\ge 1,$ and if $x_n$ has been chosen find $x_{n+1}$ such that 
 $$\int_{x_{n+1}}^{x_n}\frac{1}{\rho(x)}dx  = \frac{1}{2}\left(\log(x_{n})-\log(x_{n+1})\right)\left(\sqrt{x_{n}}+\sqrt{x_{n+1}}\right)  \ge 1.$$  }
  \end{enumerate}
\end{rem}

The next result  shows how a solution  to a BSDE can be approximated by a sequence of  solutions of BSDEs which are  driven by   L\'evy processes with a 
finite L\'evy measure. 
We do this by approximating the underlying L\'evy process defined through 
$$X_t=at+\sigma W_t+\int_{]0,t]\times \{|x|>1\}}x N(ds,dx)+\int_{]0,t]\times \{|x|\leq 1\}}x \tilde{N}(ds,dx)$$
for $n\geq 1$ by
$$X^n_t=at+\sigma W_t+\int_{]0,t]\times \{|x|>1\}}x N(ds,dx)+\int_{]0,t]\times \{1/n \leq |x|\leq 1\}}x \tilde{N}(ds,dx).$$
The process $X^n$ has a finite L\'evy measure $\nu_n$. Furthermore, note that the compensated Poisson random measure associated with $X^n$ can be expressed as 
$\tilde{N}^n=\chi_{\{1/n \leq |x|\}}\tilde{N}.$ Let   
\equal  \label{J-n}
\mathcal{J}^0&:=&\{\Omega, \emptyset\} \vee \mathcal{N},  \notag \\
\mathcal{J}^n&:=& \sigma(X^n)  \vee \mathcal{N}, \quad n\geq 1,
\tionl
where $\mathcal{N}$ stands for  the null sets of $\mathcal{F}.$  Note that   $(\mathcal{J}^n)_{n=0}^\infty$ forms a filtration. The notation 
$(\mathcal{J}^n)_{n=0}^\infty$ was chosen to indicate that this filtration  describes  the inclusion of smaller and smaller {\it jumps} of the L\'evy process. We will use
$$   \E_n \cdot := \E\left[\ \cdot\ \middle|\mathcal{J}^n\right] $$ 
  for the conditional 
expectation. 
   \smallskip

The intuitive idea now would be to  work with a BSDE driven  by $X^n$ where one uses the data  $( \E_n \xi,  \E_n f).$  
The problem  is that
the generator $f$ needs to be progressively, and also jointly measurable w.r.t. $(\omega, t,y,z,u),$  but it is not 
obvious whether the conditional expectation $ \E_n f$  preserves this property from $f$.
For BSDEs driven by a Brownian motion, this problem has been solved in \cite[Proposition 7.3]{Ylinen}, but this proposition 
does not apply to our situtation. Therefore, we next propose  a method for the construction of a unique progressively measurable and jointly measurable w.r.t. 
$(\omega, t,y,z,u)$ version of $ \E_n f.$  \smallskip

\begin{defn}[Definition of $f_n$] \label{optional-f}  Assume that $f$ satisfies  \ref{1}, \ref{2} and  that 
$\mathbbm{J}:= \left(\mathcal{J}^{[s]}\right)_{s\in[0,\infty[}$  is built using  \eqref{J-n}, where $[\cdot]$ denotes the floor function.  
Let $\phantom{I}^{\!\!\!o,\mathbbm{J}\!\!} f$  be the optional projection of the process 
\begin{align*}
{[0,\infty[}\times\Omega\times{[0,T]}\times\R^2\times L^2(\nu) \to&\ \ \R,\\
(s,\omega,t,y,z,u)\mapsto& \ f(\omega,t,y,z,u)
\end{align*} 
in the variables $(s,\omega)$ with respect to 
$\mathbbm{J},$ and with  parameters $(t,y,z,u).$
For each $n\ge 0$, assume that the filtration $\mathbbm{F}^n :=\left(\mathcal{F}_t^n\right)_{t\in{[0,T]}}$ is given by $\mathcal{F}_t^n:=\mathcal{F}_t\cap\mathcal{J}^n.$
Let $f_n$ be the optional projection of 
$$(\omega,t,y,z,u)\mapsto \phantom{I}^{\!\!\!o,\mathbbm{J}\!\!} f(n,\omega,t,y,z,u)$$
 with respect to  $\mathbbm{F}^n$ with parameters $(y,z,u)$. 

\end{defn}
\bigskip
The reason for using the filtration  $\left(\mathcal{J}^{[s]}\right)_{s\in[0,\infty[}$  instead of the  $ (\mathcal{J}^n)_{n=0}^\infty$  from  \eqref{J-n}  is  
that one can apply  known measurability  results w.r.t.~right continuous filtrations instead of proving measurability here directly. Indeed,
the optional projection  $\phantom{I}^{\!\!\!o,\mathbbm{J}\!\!}f$ defined above  is jointly measurable in $(s,\omega,t,y,z,u).$  For this we refer to  \cite{meyer}, 
where  optional and predictable projections of random processes depending on parameters were considered, and their uniqueness up to indistinguishability
was shown. 

It follows that for all $(t,y,z,u)$, $$\phantom{I}^{\!\!\!o,\mathbbm{J}\!\!}f(n,t,y,z,u)=\E_n f(t,y,z,u), \quad \mathbb{P}\text{-a.s.} $$  

Then, since $f$ is $\left(\mathcal{F}_t\right)_{t\in{[0,T]}}$-progressively measurable, for all $n\geq 0$, $t\in {[0,T]}$ and all $(y,z,u)$, it holds that
\begin{equation}\label{optionalcond}
f_n(t,y,z,u)=\E_n f(t,y,z,u), \quad\mathbb{P}\text{-a.s.}
\end{equation}
Hence, $f_n(t,y,z,u)$ is a jointly measurable version of $\E_n f(t,y,z,u)$ which  is $\left(\mathcal{F}_t^n\right)_{t\in{[0,T]}}$-optional, so especially it is 
progressively measurable. \bigskip

 We comment on the compatibility of the solutions $(Y^n,Z^n,U^n)$ from the  BSDE corresponding to  $(\E_n\xi,f_n),$ 
\equa
Y^n_t &=&\E_n \xi+\int_t^T  f_n(s,Y^n_s,Z^n_s,U^n_s)ds-\int_t^T Z^n_s dW_s \\
&&-\int_{{]t,T]}\times \R_0}U^n_s(x)\tilde{N}^n(ds,dx)
 \tion
 with the space $S^2\times L^2(W)\times L^2(\tilde N)$:
 
The triplet $(Y^n,Z^n,U^n)\in S^2\times L^2(W)\times L^2(\tilde N^n)$ can be canonically embedded in the space $S^2\times L^2(W)\times L^2(\tilde N)$, basically by extending $U^n_s(x)$ 
onto $\R_0$ by defining $U^n_s(x):=0$  for $|x|<\frac{1}{n}$. 
Moreover, recall that $\tilde{N}^n=\chi_{\{1/n \leq |x|\}}\tilde{N},$ so that $$\int_{{]t,T]}\times \R_0}\!\!U^n_s(x)\tilde{N}^n(ds,dx) =\int_{{]t,T]}\times \R_0}\!\!\!U^n_s(x)  \chi_{\{1/n \leq |x|\}}\tilde{N}(ds,dx).$$
Therefore, $\left(Y^n,Z^n,U^n\chi_{\R\setminus{]-1/n,1/n[}}\right)$ solves $(\E_n\xi,f_n)$ in $S^2\times L^2(W)\times L^2(\tilde N)$.

\begin{thm}\label{finlev}
Let $\xi\in L^2$ and let $f$ satisfy \ref{1}--\ref{3}. 
Assume that  the BSDE driven by $X^n$ with data  $(\E_n\xi,f_n)$ (where $f_n$ is 
given by Definition \ref{optional-f}) has a  unique solution denoted by $(Y^n,Z^n,U^n).$    
If  the solution $(Y,Z,U)$ to $(\xi,f)$  exists as well, then,
 $$(Y^n,Z^n,U^n)\to(Y,Z,U)$$ in $L^2(W)\times L^2(W)\times L^2(\tilde{N})$ on $(\Omega,\mathcal{F},\mathbb{P})$.
Moreover, if $f$ additionally satisfies \ref{4}, then the  mentioned solution triplets exist.

\end{thm}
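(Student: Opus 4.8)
The plan is to treat the two assertions in turn, proving existence first and then the stated convergence via an a priori estimate.

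\emph{Existence (the ``moreover'' part).} The solution $(Y,Z,U)$ to $(\xi,f)$ exists by Theorem \ref{existence}, since $f$ satisfies \ref{1}--\ref{4}. For the approximating equations I would verify that, in the finite-L\'evy-measure setting of $X^n$ (measure $\nu_n$, filtration $\mathbb{F}^n$), the projected generator $f_n$ again satisfies \ref{1}--\ref{4}; Theorem \ref{existence} then applies and yields the unique triplet $(Y^n,Z^n,U^n)$. Property \ref{1} is guaranteed by Definition \ref{optional-f}. For \ref{2} and \ref{3} I would apply $\E_n$ to the defining inequalities with $(y,z,u)$ and $(y',z',u')$ frozen: using $|\E_n f|\le\E_n|f|$ and the linearity of $\E_n$, the bounds pass to $f_n$ with coefficients $F_n=\E_n F$, $K_{i,n}=\E_n K_i$, $\beta_n=\E_n\beta$, while the deterministic term $\alpha(s)\rho(\cdot)$ is unaffected. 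Conditional Jensen for $x\mapsto x^2$ gives $K_{2,n}^2\le\E_n(K_2^2)$ and $\beta_n^2\le\E_n(\beta^2)$, whence $\int_0^T(K_{1,n}+K_{2,n}^2)\,ds\le C_K$ and $\int_0^T\beta_n^2\,ds\le c$ almost surely, and $\int_0^T F_n\,ds=\E_n I_F$ keeps $F_n$ in $L^2(\Omega;L^1([0,T]))$. Condition \ref{4} holds verbatim since $\rho$ is unchanged.

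\emph{A priori reduction of the convergence.} Embedding $U^n$ by $0$ on $\{|x|<1/n\}$ and using $\tilde N^n=\chi_{\{|x|\ge1/n\}}\tilde N$, the two jump integrals merge, so $\delta Y:=Y-Y^n$ solves a single BSDE driven by $W$ and $\tilde N$ with terminal value $\xi-\E_n\xi$, integrands $Z-Z^n$ and $U-U^n$, and driver $f(s,Y_s,Z_s,U_s)-f_n(s,Y^n_s,Z^n_s,U^n_s)$. I would then invoke the a priori estimate of section \ref{sec4}, splitting the driver as $[(f-f_n)(s,Y_s,Z_s,U_s)]+[f_n(s,Y_s,Z_s,U_s)-f_n(s,Y^n_s,Z^n_s,U^n_s)]$ and handling the second bracket through the monotonicity of $f_n$, which follows from \ref{3} by the projection argument above. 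This yields
\[
\|Y-Y^n\|_{L^2(W)}^2+\|Z-Z^n\|_{L^2(W)}^2+\|U-U^n\|_{L^2(\tilde N)}^2\le C\Big(\E|\xi-\E_n\xi|^2+\mathcal{E}_n\Big),
\]
with $\mathcal{E}_n:=\E\big(\int_0^T|(f-f_n)(s,Y_s,Z_s,U_s)|\,ds\big)^2$ and $C$ depending only on $C_K$, $c$, $T$ and $\|\alpha\|_{L^1([0,T])}$, hence uniform in $n$. The terminal term vanishes: since $\bigvee_n\mathcal{J}^n=\mathcal{F}$ and $\xi\in L^2$, L\'evy's upward martingale convergence gives $\E_n\xi\to\xi$ in $L^2$. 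It remains to show $\mathcal{E}_n\to0$.

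\emph{The generator error (main obstacle).} The key feature is that the freezing is at the $n$-independent solution $(Y,Z,U)$, so $\mathcal{E}_n$ depends on $n$ only through $f_n=\E_n f$, which removes any circularity. I would prove $\mathcal{E}_n\to0$ as a separate lemma. By \ref{2} the integrand is dominated by $F(s)+F_n(s)+(K_1+K_{1,n})(s)|Y_s|+(K_2+K_{2,n})(s)(|Z_s|+\|U_s\|)$, whose time-integral is bounded in $L^2(\Omega)$ uniformly in $n$ and uniformly integrable, because the projected coefficients converge in $L^2$. Hence it suffices to prove $f_n(s,Y_s,Z_s,U_s)\to f(s,Y_s,Z_s,U_s)$ in $\mathbb{P}\otimes\lambda$-measure. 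For a fixed argument, $f_n(\cdot,y,z,u)=\E_n f(\cdot,y,z,u)\to f(\cdot,y,z,u)$ a.e.\ by L\'evy's theorem; approximating $(Y,Z,U)$ by progressively measurable simple processes $(Y^{(k)},Z^{(k)},U^{(k)})$ with finitely many values reduces this to a finite sum of such terms, and the continuity of $f$ in \ref{3} together with the domination controls the replacement error $f(s,Y_s,Z_s,U_s)-f(s,Y^{(k)}_s,Z^{(k)}_s,U^{(k)}_s)$ as $k\to\infty$. The delicate step, which I expect to be the main obstacle, is to make the remaining error $f_n(s,Y_s,Z_s,U_s)-f_n(s,Y^{(k)}_s,Z^{(k)}_s,U^{(k)}_s)$ small \emph{uniformly in $n$}; I would address this using \ref{2}--\ref{3}, restricting first to the event on which $(Y_s,Z_s,U_s)$ is bounded and applying the Jankov--von Neumann selection theorem to obtain a jointly measurable version of the error to which dominated convergence applies. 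Combining the three estimates gives $\mathcal{E}_n\to0$, and together with the a priori bound this yields the asserted convergence in $L^2(W)\times L^2(W)\times L^2(\tilde N)$.
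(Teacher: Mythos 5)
Your existence step and the overall reduction (a priori estimate plus a generator-error term) are in the right spirit, but the route you choose for the convergence has a gap that I do not think can be closed as proposed. By comparing $(Y^n,Z^n,U^n)$ directly with $(Y,Z,U)$, your error term is
$\E\int_0^T|\Delta Y_s|\,|f_n(s,Y_s,Z_s,U_s)-f(s,Y_s,Z_s,U_s)|\,ds$, in which $f_n=\E_n f$ is evaluated at arguments $(Y_s,Z_s,U_s)$ that are \emph{not} $\mathcal{J}^n$-measurable. Consequently $f_n(s,Y_s,Z_s,U_s)$ is not the conditional expectation of anything, and L\'evy's upward theorem only gives $f_n(\cdot,y,z,u)\to f(\cdot,y,z,u)$ for \emph{fixed} $(y,z,u)$. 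Your plan to bridge this by simple-process approximation founders exactly where you flag the ``main obstacle'': to control $f_n(s,Y_s,Z_s,U_s)-f_n(s,Y^{(k)}_s,Z^{(k)}_s,U^{(k)}_s)$ uniformly in $n$ you would need a modulus of continuity for the family $\{f_n\}$ that is uniform in $n$. Conditions \ref{1}--\ref{3} provide only a one-sided monotonicity bound and pointwise a.s.\ continuity of each $f_n$ separately (and even that continuity is nontrivial --- it is the content of Lemma \ref{optcont}, proved via the Jankov--von Neumann theorem); they give no quantitative, $n$-uniform modulus. The selection theorem produces measurable versions and indistinguishability, not uniform equicontinuity, so dominated convergence for each fixed $n$ does not let you interchange the limits in $k$ and $n$.

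The paper circumvents this by comparing $(Y^n,Z^n,U^n)$ with the projections $(\E_nY,\E_nZ,\E_nU)$ instead of with $(Y,Z,U)$. Conditioning the original BSDE on $\mathcal{J}^n$ yields a BSDE driven by $X^n$ because $\E_n\int_{]t,T]\times\{|x|<1/n\}}U_s(x)\tilde N(ds,dx)=0$; then, since $Y^n_s-\E_nY_s$ and $(Y^n_s,Z^n_s,U^n_s)$ are $\mathcal{F}^n_s$-measurable, the tower property removes every conditional expectation inside $\E\bigl[(Y^n_s-\E_nY_s)\bigl(f_n(s,Y^n_s,Z^n_s,U^n_s)-\E_nf(s,Y_s,Z_s,U_s)\bigr)\bigr]$, leaving only the single generator $f$ evaluated at $(Y^n_s,Z^n_s,U^n_s)$ and at $(Y_s,Z_s,U_s)$. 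The error term then becomes $f(s,\E_nY_s,\E_nZ_s,\E_nU_s)-f(s,Y_s,Z_s,U_s)$, which tends to zero by martingale convergence of the arguments and the continuity of the one fixed $f$, with uniform integrability supplied by the a priori bounds; the conclusion follows by the triangle inequality with $\E_nY\to Y$. I would recommend restructuring your convergence argument along these lines; your verification that $f_n$ inherits \ref{2}--\ref{4} with coefficients $\E_nF,\E_nK_i,\E_n\beta$ (via conditional Jensen) is correct and is reused in that approach, though you should also supply the a.s.\ joint continuity of $f_n$ in $(y,z,u)$ required by \ref{3}, which is where the Jankov--von Neumann theorem is actually needed.
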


The benefit of this approximation becomes clear 
in the  proof of the comparison theorem which we state next. There, we only need to prove the comparison result
assuming a finite L\'evy measure, since the  general case then follows by approximation. \smallskip \\
Another consequence of this approximation result concerns nonlinear expectations. 
(For a survey article on nonlinear expectations the reader is referred to Peng \cite{pengII}.) In the case of L\'evy processes, 
provided that $f(s,y,0,0)=0$ for all $s$ and $y,$
the process $Y_t$ has been described by Royer in \cite{royer}  as a 
conditional nonlinear expectation, denoted by  $\E^f_t \xi:=Y_t.$  
Hence, our theorem implies that     $$(\E^{f_n}_t \E_n\xi )_{t \in [0,T]} \to  (\E^{f}_t \xi )_{t \in [0,T]} \quad \text{ in }   L^2(W) .$$

\begin{thm}\label{comparison}
Let $f,f'$ be two generators satisfying the conditions \ref{1}--\ref{3} of Theorem \ref{existence} ($f$ and $f'$ may have different coefficients). We assume $\xi\leq \xi'$, $\mathbb{P}$-a.s. and for all $(y,z,u)$, $f(s,y,z,u)\leq f'(s,y,z,u)$, for $\mathbb{P}\otimes\lambda$-a.a. $(\omega,s)\in\Omega\times{[0,T]}$.  Moreover, assume that $f$ or $f'$
satisfy the condition (here formulated for $f$)
\begin{enumerate}[label={(A\,$\gamma$)}] {\ch
\item\label{gamma}  $f(s,y,z,u)- f(s,y,z,u')\leq \int_{\R_0}(u'(x)-u(x))\nu(dx), \quad \mathbb{P}\otimes\lambda$-a.e. \\
\text{for all} \,\, $u, u'\in L^2(\nu) \, \text{ with } \,  u\leq u'. $ }
\end{enumerate}

Let $(Y,Z,U)$ and $(Y',Z',U')$ be the solutions to $(\xi,f)$ and $(\xi',f')$, respectively.

Then, $Y_t\leq Y'_t$, $\mathbb{P}$-a.s.
\end{thm}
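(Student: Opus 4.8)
The plan is to reduce the general case to one with a finite L\'evy measure and then prove the comparison there by an It\^o argument on the positive part of the difference, using \ref{gamma} to tame the jump compensator.

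\emph{Reduction.} Given the solutions $(Y,Z,U)$, $(Y',Z',U')$, I pass to the approximating BSDEs $(\E_n\xi,f_n)$ and $(\E_n\xi',f_n')$ driven by $X^n$, whose L\'evy measure $\nu_n$ is finite. First I check that the comparison hypotheses are inherited: $\E_n\xi\le\E_n\xi'$ by monotonicity of conditional expectation; $f_n\le f_n'$ since $f\le f'$ pointwise and the optional projection is monotone; and, embedding $L^2(\nu_n)$ into $L^2(\nu)$ by extension by zero, condition \ref{gamma} for $f_n$ w.r.t.\ $\nu_n$ follows by applying $\E_n$ to \ref{gamma} for $f$ (the right-hand side is deterministic and unchanged by the embedding). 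The same averaging preserves the Lipschitz modulus in $u$. Granting the finite-measure comparison below, Theorem \ref{finlev} gives $Y^n_t\le{Y'}^n_t$ for every $n$, and since $Y^n\to Y$, ${Y'}^n\to Y'$ in $L^2(W)$, a subsequence converges $\PP\otimes\lambda$-a.e.; right-continuity of the paths then yields $Y_t\le Y'_t$ for all $t$.

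\emph{Finite measure.} Write $\delta Y=Y-Y'$, $\delta Z=Z-Z'$, $\delta U=U-U'$, $\delta\xi=\xi-\xi'\le0$, and $\Delta f_s=f(s,Y_s,Z_s,U_s)-f'(s,Y'_s,Z'_s,U'_s)$. Applying It\^o to $\phi(\delta Y):=((\delta Y)^+)^2$ and taking expectations, the Brownian term contributes $-\mathbbm{1}_{\{\delta Y_s>0\}}|\delta Z_s|^2$ and the jump term the nonnegative convexity remainder $R_s=\int_{\R_0}(\phi(\delta Y_s+\delta U_s)-\phi(\delta Y_s)-\phi'(\delta Y_s)\delta U_s)\,\nu(dx)$ with a minus sign, so
\[\psi(t):=\E[\phi(\delta Y_t)]=\E\int_t^T\big[\phi'(\delta Y_s)\Delta f_s-\mathbbm{1}_{\{\delta Y_s>0\}}|\delta Z_s|^2-R_s\big]\,ds.\]
I split $\Delta f_s$ into the $u$-increment $f(s,Y_s,Z_s,U_s)-f(s,Y_s,Z_s,U'_s)$, the $(y,z)$-increment at $u=U'_s$, and $g_s:=(f-f')(s,Y'_s,Z'_s,U'_s)\le0$. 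The $(y,z)$-increment is controlled by \ref{3} with $u=u'$ (no $\|\delta U\|$ appears), and the resulting $\beta_s(\delta Y_s)^+|\delta Z_s|$ is absorbed into $-\mathbbm{1}_{\{\delta Y_s>0\}}|\delta Z_s|^2$ by Young's inequality. For the $u$-increment I construct a bounded predictable kernel $\gamma_s(x)\ge-1$ with $f(s,Y_s,Z_s,U_s)-f(s,Y_s,Z_s,U'_s)=\int_{\R_0}\gamma_s(x)\delta U_s(x)\,\nu(dx)$; a pointwise computation gives, for each $x$, $2(\delta Y_s)^+(1+\gamma_s(x))\delta U_s(x)-(\phi(\delta Y_s+\delta U_s(x))-\phi(\delta Y_s))\le(\gamma_s(x)^2\vee1)\phi(\delta Y_s)$, so that this block together with $-R_s$ is bounded by $C\phi(\delta Y_s)$ with $C=\int_{\R_0}(\gamma_s^2\vee1)\,\nu<\infty$. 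Collecting terms and using concavity of $\rho$ (Jensen), $\psi$ satisfies $\psi(t)\le\int_t^T\big(2\alpha(s)\rho(\psi(s))+c\,\psi(s)\big)\,ds$; the backward Bihari--LaSalle inequality (Appendix) forces $\psi\equiv0$, i.e.\ $Y_t\le Y'_t$.

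\emph{Main obstacle.} The crux is the kernel $\gamma_s\ge-1$. Condition \ref{gamma} is one-sided---it compares only ordered arguments $u\le u'$---whereas the solution fields $U_s,U'_s$ are not ordered. Reading \ref{gamma} as a statement about downward directional derivatives shows that any mean-value kernel representing the $u$-increment (available from Lipschitz continuity in $u$ via a measurable-selection/averaging argument) must satisfy $\gamma_s\ge-1$; this is exactly the sign that makes the pointwise bound above work, where it is used in the regime $\delta Y_s+\delta U_s(x)<0$. The degenerate value $\gamma_s=-1$ is harmless here because only nonnegativity and square-$\nu$-integrability of $1+\gamma_s$ enter, not an equivalent change of measure---the latter would fail, which is precisely why an infinite $\nu$ cannot be treated directly and is instead routed through the finite-measure approximation.
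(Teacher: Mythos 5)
Your overall architecture (reduce to finite $\nu$ via the $(\E_n\xi,f_n)$ approximation of Theorem \ref{finlev}, then run It\^o/Tanaka on $((\delta Y)^+)^2$ and close with the backward Bihari--LaSalle inequality) matches the paper, and your pointwise inequality for the jump terms is correct \emph{given} the kernel you postulate. But the construction of that kernel is a genuine gap, and it sits exactly at the point the theorem is designed to avoid.

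You assert the existence of a bounded predictable $\gamma_s(x)\ge -1$ with
$f(s,Y_s,Z_s,U_s)-f(s,Y_s,Z_s,U'_s)=\int_{\R_0}\gamma_s(x)\,\delta U_s(x)\,\nu(dx)$,
claiming it comes from ``Lipschitz continuity in $u$ via a measurable-selection/averaging argument'' and that \ref{gamma} forces $\gamma_s\ge-1$. Neither claim holds under the stated hypotheses. First, Theorem \ref{comparison} assumes only \ref{1}--\ref{3}; Lipschitz continuity in $u$ is derived in Remark \ref{remark} only under the additional hypothesis \ref{4}, so it is not available here. Second, even granting Lipschitz continuity in the $L^2(\nu)$-norm, the Riesz/mean-value representation yields a kernel controlled in $\|\cdot\|_{L^2(\nu)}$, not pointwise bounded and not pointwise $\ge-1$; and \ref{gamma} constrains $f$ only along \emph{ordered} pairs $u\le u'$, whereas $U_s$ and $U'_s$ are not ordered. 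A representing kernel with $\gamma\ge-1$ (plus integrability) is precisely the stronger hypothesis \eqref{Agamr} of Royer and Kruse--Popier that the paper's condition \ref{gamma} is meant to weaken; if it could be manufactured from \ref{gamma}, the theorem would add nothing. The paper's proof never represents the $u$-increment by a kernel. Instead it sets $B=\{\delta U_s(x)\ge-\delta Y_s\}$ and interpolates through $\tilde U:=U'_s\chi_B+U_s\chi_{B^c}$: the increment from $(Y_s,Z_s,U_s)$ to $(Y'_s,Z'_s,\tilde U)$ has $u$-difference $\delta U_s\chi_B$ and is absorbed by \ref{3} (its $\|\cdot\|^2$ contribution cancels against $\int_B|\delta U_s|^2\nu(dx)$ from the Tanaka remainder), while on $\{\delta Y_s>0\}$ one has $\tilde U\le U'_s$, so the increment from $\tilde U$ to $U'_s$ is an \emph{ordered} comparison to which \ref{gamma} applies directly, producing exactly $-\int_{B^c}\delta Y_s\,\delta U_s\,\nu(dx)$, which cancels the remaining $B^c$-terms. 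You need this (or an equivalent device) in place of the kernel.

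Two smaller points: in the reduction, it is the finite-measure comparison that gives $Y^n\le {Y^n}'$, while Theorem \ref{finlev} supplies only the convergence; and when transferring \ref{gamma} to $f_n$ one should note, as the paper does, that it is needed only for $u,u'$ vanishing on $\{|x|<1/n\}$, for which $\nu$ may be replaced by $\nu_n$.
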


\begin{proof}
The basic idea for this proof was inspired by the one of Theorem 8.3 in \cite{carmona}.

\texttt{Step 1:} \\
In this step we assume that the L\'evy measure $\nu$ is finite.
We use Tanaka--Meyer's formula (cf. \cite[Theorem 70]{protter}) to see that for $\eta(s):=2\beta(s)^2+\nu(\R_0)$,
\begin{align*}
&e^{\int_0^t \eta(s)ds}(Y_t-Y'_t)^2_+=e^{\int_0^T \eta(s)ds}(\xi-\xi')^2_++M(t)\\
& +\int_t^T e^{\int_0^s \eta(\tau)d\tau}\chi_{\{Y_s-Y'_s\geq 0\}}\biggl[2(Y_s-Y'_s)_+\left(f(s,Y_s,Z_s,U_s)-f'(s,Y'_s,Z'_s, U'_s)\right)\\
 &-|Z_s-Z'_s|^2- \eta(s)|Y_s-Y'_s|^2\\
 &-\int_{\R_0}\biggl((Y_s-Y'_s+U_s(x)-U'_s(x))^2_+-(Y_s-Y'_s)^2_+\\
&\quad\quad\quad\quad\quad\quad\quad-2(U_s(x)-U'_s(x))(Y_s-Y'_s)_+\biggr)\nu(dx)\biggr]ds.
\end{align*}
Here, $M(t)$ is a stochastic integral term having zero expectation which follows from  $Y,Y'\in\mathcal{S}^2$ (this holds
according to Theorem \ref{existence}). Moreover, we used that on the set $\{\Delta Y_s\geq 0\}$ (where  $\Delta Y:=Y-Y'$) we have 
$(Y_s-Y'_s)_+=|Y_s-Y'_s|$. Taking means and denoting the differences by  $\Delta \xi := \xi-\xi', \,\Delta Z := Z-Z', \, \Delta U:= U-U'$ and  $\Delta f:= f-f'$ leads us to
\begin{align}\label{TanMey}
&\E  e^{\int_0^t \eta(s)ds}(\Delta Y_t)^2_+=\E  e^{\int_0^T \eta(s)ds}(\Delta \xi)^2_+ \nonumber\\
&+\E \bigg \{ \int_t^T e^{\int_0^s \eta(\tau)d\tau}\chi_{\{\Delta Y_s\geq 0\}}\biggl[2(\Delta Y_s)_+\left(f(s,Y_s,Z_s,U_s)-f'(s,Y'_s,Z'_s, U'_s)\right)  \nonumber \\
& -|\Delta Z_s|^2- \eta(s)|\Delta Y_s|^2\nonumber\\
&-\int_{\R_0}\left((\Delta Y_s+ \Delta U_s(x))^2_+-(\Delta Y_s)^2_+-2(\Delta U_s(x))(\Delta Y_s)_+\right)\nu(dx)\biggr]ds \bigg \},
\end{align} 
 We split up the set $\R_0$ into $$B(\omega,s)=B=\{\Delta U_s(x)\geq  -\Delta Y_s\}\text{ and } B^c.$$
 Taking into account that $\xi\leq \xi'$, we estimate
\begin{align}\label{TanMeyf}
&\E e^{\int_0^t \eta(s)ds}(\Delta Y_t)^2_+\nonumber\\
\leq&\E \bigg \{ \int_t^T e^{\int_0^s \eta(\tau)d\tau}\chi_{\{\Delta Y_s\geq 0\}}\biggl[2(\Delta Y_s)_+\left(f(s,Y_s,Z_s,U_s)-f'(s,Y'_s,Z'_s, U'_s)\right)\\
& -|\Delta Z_s|^2- \eta(s)|\Delta Y_s|^2\nonumber\\
&-\!\int_{B}\!|\Delta U_s(x)|^2\nu(dx)+\int_{B^c}\!\left((\Delta Y_s)^2_++2(\Delta U_s(x))(\Delta Y_s)_+\right)\nu(dx)\biggr]ds \bigg \} .\nonumber
\end{align} 
 We focus on the term $(\Delta Y_s)_+\left(f(s,Y_s,Z_s,U_s)-f'(s,Y'_s,Z'_s, U'_s)\right)$, and denoting $((Y,Z),(Y',Z'))$ by $(\Theta,\Theta'),$
 we derive from   $f\leq f'$ that
 \begin{align*}
& (\Delta Y_s)_+\left(f(s,\Theta_s,U_s)-f'(s,\Theta'_s, U'_s)\right)\\
& =(\Delta Y_s)_+\left(f(s,\Theta_s,U_s)-f(s,\Theta'_s, U'_s)+f(s,\Theta'_s,U'_s)-f'(s,\Theta'_s, U'_s)\right)\nonumber\\
&\leq(\Delta Y_s)_+\left(f(s,\Theta_s,U_s)-f(s,\Theta'_s, U'_s)\right).\nonumber
 \end{align*}
We continue with the observation that on $\{\omega: \Delta Y_s >0 \}$ we have
$$ B^c = \{ \Delta U_s(x)< - \Delta Y_s\} \subseteq \{  U'_s(x) > U_s(x)\},  $$ 
so that 
       $$  U'_s\chi_B+U_s\chi_{B^c} \le   U'_s\chi_B+U'_s\chi_{B^c}   \quad \text{ on } \quad    \{\omega: \Delta Y_s >0 \}.    $$   
Therefore, we split  $(\Delta Y_s)_+\left(f(s,\Theta_s,U_s)-f(s,\Theta'_s, U'_s)\right)$  into two  terms; one we estimate with \ref{3} and the first inequality of \eqref{algebra}, while for the other  we use \ref{gamma}: 
\begin{align*}
& \hspace{-2em} (\Delta Y_s)_+\left(f(s,\Theta_s,U_s)-f(s,\Theta'_s, U'_s)\right)\nonumber\\
=&(\Delta Y_s)_+\left(f(s,\Theta_s,U_s\chi_B+U_s\chi_{B^c})-f(s,\Theta'_s, U'_s\chi_B+U_s\chi_{B^c})\right)\nonumber\\
&+(\Delta Y_s)_+\left(f(s,\Theta'_s,U'_s\chi_B+U_s\chi_{B^c})-f(s,\Theta'_s, U'_s\chi_B+U'_s\chi_{B^c})\right)\\
\leq&\, \alpha(s)\rho((\Delta Y_s)_+^2)+\beta(s)^2(\Delta Y_s)_+^2+\frac{|\Delta Z_s|^2}{2}+\frac{\|\Delta U_s\chi_B\|^2}{2}\nonumber\\
&- \int_{ \R_0}(\Delta Y_s)_+ \Delta U_s(x)\chi_{B^c}\nu(dx).\nonumber
\end{align*} 
Thus, by the last two inequalities, \eqref{TanMeyf} evolves to 
\begin{align*}
&\E e^{\int_0^t \eta(s)ds}(\Delta Y_t)^2_+\\
&\leq \ \E \bigg \{\!\!\int_t^T e^{\int_0^s  \eta (\tau)d\tau}\chi_{\{\Delta Y_s\geq 0\}}\biggl[2\alpha(s){\rho}((\Delta Y_s)_+^2)+2\beta(s)^2(\Delta Y_s)_+^2+|\Delta Z_s|^2\\
& \quad+\|\Delta U_s\chi_B\|^2 -\int_{B^c}2(\Delta Y_s)_+ (\Delta U_s(x))\nu(dx)-|\Delta Z_s|^2-\eta(s)|\Delta Y_s|^2\nonumber\\
&\quad-\!\int_{B}\!|\Delta U_s(x)|^2\nu(dx)+\int_{B^c}\!\left((\Delta Y_s)^2_++2(\Delta Y_s)_+(\Delta U_s(x))\right)\nu(dx)\biggr]ds \bigg \}.\nonumber
\end{align*} 
Because of $\|\Delta U_s\chi_B\|^2=\int_{B}\!|\Delta U_s(x)|^2\nu(dx)$, we cancel out terms and get
\begin{align*}
&\E e^{\int_0^t \eta (s)ds}(\Delta Y_t)^2_+\\
&\leq \ \E \bigg \{\int_t^T e^{\int_0^s \eta (\tau)d\tau}\chi_{\{\Delta Y_s\geq 0\}}\biggl[2\alpha(s){\rho}((\Delta Y_s)_+^2)+2\beta(s)^2(\Delta Y_s)_+^2\\
&\quad\quad\quad\quad - \eta(s)|\Delta Y_s|^2 +\int_{B^c}\!(\Delta Y_s)^2_+\nu(dx)\biggr]ds \bigg \}.\nonumber
\end{align*} 
Bounding $\int_{B^c}\!(\Delta Y_s)^2_+\nu(dx)$ by $\nu(\R_0)(\Delta Y_s)^2_+$, leads us to
\begin{align*}
&\E e^{\int_0^t \eta (s)ds}(\Delta Y_t)^2_+\\
&\leq \ \E \bigg \{\int_t^T e^{\int_0^s \eta(\tau)d\tau}\chi_{\{\Delta Y_s\geq 0\}}\biggl[2\alpha(s){\rho}((\Delta Y_s)_+^2)\\
&\quad +(2\beta(s)^2+\nu(\R_0))(\Delta Y_s)_+^2 -\eta(s)|\Delta Y_s|^2\biggr]ds \bigg \}.\nonumber
\end{align*} 
It remains, also using  the definition of $\eta$,
\begin{align*}
&\E e^{\int_0^t \eta (s)ds}(\Delta Y_t)^2_+\leq\E\int_t^T e^{\int_0^s \eta (\tau)d\tau}2\alpha(s){\rho}((\Delta Y_s)_+^2)ds.\nonumber
\end{align*}
The term $e^{\int_0^T \eta(\tau)d\tau}$ is $\mathbb{P}$-a.s. bounded by a constant $C>0$. Thus, by the concavity of $\rho$, we arrive at
\begin{align*}
&\E(\Delta Y_t)^2_+\leq\E \big [e^{\int_0^t \eta (s)ds}(\Delta Y_t)^2_+\big ]\leq\int_t^T 2C\alpha(s){\rho}(\E(\Delta Y_s)_+^2)ds.\nonumber
\end{align*}
Then, the Bihari--LaSalle inequality (Proposition \ref{bihari-prop})---a generalization of Gronwall's inequality---shows that $\E(\Delta Y_t)^2_+=0$ for all $t\in{[0,T]}$, which is the desired result for $\nu(\R_0)<\infty$.

\bigskip
\texttt{Step 2:}

The goal of this step is to extend the result of the first step to general L\'evy measures. We adapt the notation of Theorem \ref{finlev} for $Y^n, {Y^n}', f_n,$ 
and $f_n'$.
Now, we claim  that for solutions $Y^n$ and ${Y^n}'$ of $(\E_n\xi,f_n)$ and $(\E_n\xi',f_n'),$   Step 1 granted 
that $Y^n\leq {Y^n}':$ Indeed, $f_n\leq f_n'$ holds by the monotonicity of $\E_n$, and also \ref{gamma} holds for $f_n$ if it did for $f.$ One notes that
the process $X^n$ which is related to $(\E_n\xi,f_n)$ and $(\E_n\xi',f_n')$ has a finite L\'evy  measure $\nu_n$ satisfying $\nu_n(|x|<\frac{1}{n} )=0,$ while in   \ref{gamma} 
we still have $\nu.$  However, the solution processes $U^n$ and ${U^n}'$ are zero for $|x|<\frac{1}{n}$  (see the comment before Theorem \ref{finlev}).

Hence, we need   \ref{gamma}  only for $u$ and $u'$ which are zero for $|x|<\frac{1}{n},$ and for those $u$ and $u'$ we may replace $\nu$ by $\nu_n$ and then apply  Step 1. 
Finally, the convergence of the sequences to the solutions $Y$ and $Y'$ of $(\xi,f)$ and $(\xi',f')$, respectively, in $L^2(W)$ shows $Y\leq Y',$ and 
our theorem is proven.
 \end{proof}

\section{Auxiliary Results} \label{sec4}
We will frequently use the following basic algebraic inequalities (special cases of Young's inequality) which hold for all $R>0$:
\begin{equation}\label{algebra}
ab\leq \frac{a^2}{2R}+\frac{Rb^2}{2}\quad\quad\text{and}\quad\quad ab\leq \frac{Ra}{2}+\frac{ab^2}{2R}.
\end{equation}
The following proposition states, roughly speaking, that for the BSDEs considered here it is sufficient to find solution processes of a BSDE in the (larger) space $L^2(W)\times L^2(W)\times L^2(\tilde{N})$.
\begin{prop}\label{supprop}
If $(Y,Z,U)\in L^2(W)\times L^2(W)\times L^2(\tilde{N})$ is a triplet of processes that satisfies the BSDE $(\xi,f)$ with $\xi\in L^2$ and 
\ref{1}, \ref{2}, then $(Y,Z,U)$ is a solution to \eqref{bsde}, i.e., $(Y,Z,U)\in \mathcal{S}^2\times L^2(W)\times L^2(\tilde{N})$. In particular, 
there exists a  constant $C_1>0$ 
such that
\begin{align*}
&\|Y\|^2_{\mathcal{S}^2} +\left\|Z\right\|_{L^2(W) }^2 + \left\|U\right\|_{L^2(\tilde N) }^2  \leq e^{C_1(1+ C_K)^2} \left(\E |\xi|^2+\E I^2_F\right),
\end{align*}
where $C_K$ was defined in \eqref{C-K} and $I_F$ in \eqref{I-FandK-F}.
\end{prop}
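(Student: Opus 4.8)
The plan is to upgrade the a priori integrability of $Y$ from $L^2(W)$ to $\mathcal{S}^2$ by an It\^o/BDG energy estimate, reading off the quantitative bound along the way. First I would note that the right-hand side of \eqref{bsde}, viewed as a function of $t$, is c\`adl\`ag (the $ds$-integral is continuous and the two stochastic integrals are c\`adl\`ag), so $Y$ admits a c\`adl\`ag modification, with which I work. Since a priori only $Y,Z\in L^2(W)$ and $U\in L^2(\tilde N)$ are known, the stochastic integrals appearing below are merely \emph{local} martingales; I therefore introduce a reducing sequence of stopping times $\tau_k\uparrow T$ (e.g.\ also bounding $\int_0^{\cdot}(|Z_s|^2+\|U_s\|^2)\,ds$ by $k$) so that every stochastic integral up to $\tau_k$ is a true martingale and all expectations are finite. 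The final estimate is then obtained by letting $k\to\infty$ via Fatou and monotone convergence.

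The core computation applies It\^o's formula to the weighted process $e^{A_t}|Y_t|^2$ with $A_t:=\int_0^t(2K_1(s)+4K_2(s)^2)\,ds$. The point of this \emph{random} weight is that it exactly cancels the quadratic-in-$Y$ term produced by the linear-growth bound \ref{2}; it is the clean substitute for a Gronwall step, which is unavailable here because $K_1,K_2$ are random and time dependent. Concretely, bounding $2e^{A_s}Y_sf(s,Y_s,Z_s,U_s)$ via \ref{2} and splitting the cross terms $K_2|Y||Z|$ and $K_2|Y|\,\|U\|$ with Young's inequality \eqref{algebra} absorbs half of $e^{A_s}|Z_s|^2$ and of $e^{A_s}\|U_s\|^2$ into the left-hand side, while the remaining $Y^2$-terms cancel against $-\int a\,e^{A_s}Y_s^2\,ds$. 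Using $1\le e^{A_s}\le e^{A_T}\le e^{4C_K}$ (with $C_K$ from \eqref{C-K}) and handling the $F$-term by $2\int_t^T e^{A_s}|Y_s|F(s)\,ds\le 2e^{4C_K}(\sup_{[0,T]}|Y|)\,I_F$, this leaves, after localization and taking expectations, the clean inequality
\[
\E\big[e^{A_t}|Y_t|^2\big]+\tfrac12\,\E\!\int_t^T\!\!\big(|Z_s|^2+\|U_s\|^2\big)ds
\le e^{4C_K}\E|\xi|^2+2e^{4C_K}\,\E\big[\,\sup_{[0,T]}|Y|\cdot I_F\,\big],
\]
with $I_F$ from \eqref{I-FandK-F}. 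This already controls $\E\int_0^T(|Z|^2+\|U\|^2)ds$ once $\E\sup|Y|^2$ is under control.

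To control $\E\sup_{[0,T]}|Y|^2$ I would instead take the supremum over $t$ in the pathwise version of the same identity and estimate the running supremum of the stochastic integral term by the Burkholder--Davis--Gundy inequality; its quadratic variation is dominated (up to the bounded weight) by $(\sup_{[0,T]}|Y|)^2\int_0^T(|Z|^2+\|U\|^2)ds$, so Cauchy--Schwarz gives a bound of the form $\mathrm{const}\cdot(\E\sup|Y|^2)^{1/2}(\E\int_0^T(|Z|^2+\|U\|^2)ds)^{1/2}$. Combining this with the previous display and splitting both $\E[\sup|Y|\cdot I_F]$ and the BDG mixed term by Young's inequality with sufficiently small parameters (chosen in terms of $e^{4C_K}$ and the universal BDG constant) lets me absorb $\E\sup|Y|^2$ and $\E\int_0^T(|Z|^2+\|U\|^2)ds$ from the right-hand side into the left. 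Solving the resulting coupled pair of inequalities yields a bound of the claimed shape $e^{C_1(1+C_K)^2}(\E|\xi|^2+\E I_F^2)$, where the quadratic dependence of the exponent on $C_K$ is simply the crude combination of the factor $e^{4C_K}$ with the reciprocals of the Young parameters. Finally, letting $\tau_k\to T$ and invoking Fatou and monotone convergence transfers the estimate to $(Y,Z,U)$ itself; in particular $\E\sup_{[0,T]}|Y|^2<\infty$, i.e.\ $Y\in\mathcal{S}^2$, so $(Y,Z,U)$ is a genuine solution of \eqref{bsde}.

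The step I expect to be the main obstacle is precisely this simultaneous absorption. It is delicate for two reasons: the BDG term and the $F$-term both reintroduce $\E\sup|Y|^2$ on the right, so one must check that the total coefficient can be pushed below $1$ by the choice of Young parameters; and, crucially, such an absorption is only legitimate once $\E\sup|Y|^2$ (for the stopped process) is known to be finite, which is exactly what the localization guarantees. The second genuinely technical point is that $F$ lies only in $L^2(\Omega;L^1([0,T]))$ and not in $L^2(\Omega\times[0,T])$, so $\int_0^T F^2\,ds$ need not be integrable; this forces the estimate $2\int|Y|F\le 2(\sup|Y|)I_F$ to go through the supremum norm rather than a pointwise Young bound, and is the reason the $\mathcal{S}^2$-norm and the $Z,U$-norms must be estimated jointly rather than one after the other.
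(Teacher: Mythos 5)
Your second half (the weighted It\^o identity with $A_t=\int_0^t(2K_1+4K_2^2)\,ds$, the cancellation of the quadratic-in-$Y$ terms, and the coupled absorption of $\E\sup_t|Y_t|^2$ and $\E\int_0^T(|Z_s|^2+\|U_s\|^2)\,ds$) is essentially the paper's second step. The gap lies in how you propose to make the absorption legitimate. Your localization caps $\int_0^\cdot(|Z_s|^2+\|U_s\|^2)\,ds$; this does make the stochastic integrals in It\^o's formula true martingales, but it does \emph{not} make $\E\sup_{t\le\tau_k}|Y_t|^2$ finite: the generator contains the term $\int_0^TK_1(s)|Y_s|\,ds$, and since $K_1$ is random and only time-integrable ($\int_0^TK_1\,ds\le C_K$ a.s., with no pointwise bound), this integral is only dominated by $C_K\sup_{[0,T]}|Y|$ --- the very quantity whose square-integrability is at stake (note that $\E\int_0^TK_1(s)|Y_s|^2\,ds$ is not a priori finite even though $Y\in L^2(W)$). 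Consequently neither $\E[\sup_{[0,T]}|Y|\cdot I_F]$ nor the BDG mixed term is known to be finite, so neither can be ``absorbed''. Moreover, It\^o's formula on $[t\wedge\tau_k,\tau_k]$ produces the terminal term $\E[e^{A_{\tau_k}}|Y_{\tau_k}|^2]$ rather than $\E|\xi|^2$; since this sits on the right-hand side, Fatou points the wrong way, and replacing it by $\E|\xi|^2$ in the limit would require uniform integrability of $(|Y_{\tau_k}|^2)_k$, i.e.\ again $Y\in\mathcal{S}^2$. Stopping when $|Y|$ reaches level $k$ does not help either: the resulting bound for $\E\sup_{t\le\tau_k}|Y_t|^2$ then contains $\E|Y_{\tau_k}|^2\le\E\sup_{t\le\tau_k}|Y_t|^2$ with a coefficient that cannot be pushed below $1$.

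The paper closes exactly this gap by a preliminary, more elementary step that uses neither localization nor It\^o's formula. It multiplies the BSDE identity by $Y_t$, applies Young's inequality \eqref{algebra} and \ref{2} (writing $F=K_F\,I_F$), and arrives at a \emph{pathwise} inequality of the form $|Y_t|^2\le R_0[\,\cdots\,]+R_0\int_t^TK_1(s)|Y_s|^2\,ds$. Since $t\mapsto|Y_t|^2$ is c\`adl\`ag, hence pathwise bounded, and $K_1(\omega,\cdot)\in L^1([0,T])$, the backward Gronwall inequality \eqref{gronwall-backwards} applies $\omega$-wise and eliminates the $\int K_1|Y_s|^2\,ds$ term \emph{before} any expectation is taken. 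Doob's maximal inequality applied to $\int_0^\cdot Z\,dW$ and $\int_0^\cdot U\,d\tilde N$ --- which are genuine $L^2$-martingales because $Z\in L^2(W)$ and $U\in L^2(\tilde N)$ by hypothesis, so no stopping is needed --- then yields $\E\sup_t|Y_t|^2\le c_1\left(\E|\xi|^2+\E I_F^2+12\,\E\int_0^T(|Z_s|^2+\|U_s\|^2)\,ds\right)<\infty$. Only with this finiteness in hand does the paper run the weighted It\^o computation (whose local martingale is now a true martingale by BDG) and solve the resulting pair of coupled inequalities. If you insert such a pathwise-Gronwall step at the start, the remainder of your argument goes through.
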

\begin{proof}
Since $(Y,Z,U)$ satisfies \eqref{bsde}, it holds that
\begin{align*}
|Y_t|^2= &Y_t\xi+Y_t\int_t^T f(s,Y_s,Z_s,U_s)ds-Y_t\int_t^T Z_s dW_s\\
&-Y_t\int_{{]t,T]}\times\R_0}U_s(x)\tilde{N}(ds,dx).
\end{align*}
We apply the first inequality of \eqref{algebra}, where $Y_t$ takes the role of $a$, to get for an arbitrary $R>0$:
\begin{align*}
|Y_t|^2\leq&\frac{3|Y_t|^2}{2R}+\frac{R|\xi|^2}{2}+\frac{R}{2}\left(\left|\int_t^T Z_s dW_s\right|^2+\left|\int_{{]t,T]}\times\R_0}U_s(x)\tilde{N}(ds,dx)\right|^2\right)\\
&+|Y_t|\int_t^T |f(s,Y_s,Z_s,U_s)|ds.
\end{align*}
Condition \ref{2} implies
\begin{align*}
|Y_t|^2\leq&\frac{3|Y_t|^2}{2R}+\frac{R|\xi|^2}{2}+\frac{R}{2}\left(\left|\int_t^T Z_s dW_s\right|^2+\left|\int_{{]t,T]}\times\R_0}U_s(x)\tilde{N}(ds,dx)\right|^2\right)\\
&+|Y_t|\int_t^T \left(F(s)+K_1(s)|Y_s|+K_2(s)\left(|Z_s|+\|U_s\|\right)\right)ds.
\end{align*}
We estimate with the help of the inequalities \eqref{algebra},
\begin{align*}
|Y_t|F(s) &\leq K_F(s)\left(\frac{|Y_t|^2}{2R}+\frac{RI_F^2}{2}\right),\\
K_1(s)|Y_t||Y_s|&\leq K_1(s)\left(\frac{|Y_t|^2}{2R}+\frac{R|Y_s|^2}{2}\right),\\
|Y_t|K_2(s)\left(|Z_s|+\|U_s\|\right)&\leq \frac{K_2(s)^2|Y_t|^2}{2R}+R\left(|Z_s|^2+\|U_s\|^2\right).
\end{align*}
Hence,
\begin{align*}
|Y_t|^2\leq&\frac{|Y_t|^2}{2R}\left(4+\int_t^T\left(K_F(s)+K_1(s)+K_2(s)^2\right)ds\right)+\frac{R|\xi|^2}{2}\\
&+\frac{R}{2}\left(\left|\int_t^T Z_s dW_s\right|^2+\left|\int_{{]t,T]}\times\R_0}U_s(x)\tilde{N}(ds,dx)\right|^2\right)\\
&+\frac{R}{2}  I_F^2 \int_t^T K_F(s)ds+R\!\int_t^T\left(|Z_s|^2+\|U_s\|^2\right)ds+\!\int_t^T\!\!\frac{RK_1(s)|Y_s|^2}{2}ds.
\end{align*}
Note that $\int_0^T K_F(s)ds =1$ and choose $R=R_0:= 5+\int_0^T\left(K_1(s)+K_2(s)^2\right)ds$ so that
\begin{align*}
|Y_t|^2\leq&R_0\Biggl[|\xi|^2+\!\!\sup_{t\in{[0,T]}}\!\left(\left|\int_t^T Z_s dW_s\right|^2\!\!+\left|\int_{{]t,T]}\times\R_0}U_s(x)\tilde{N}(ds,dx)\right|^2\right)\\
&\quad \quad \quad \quad\quad \quad \quad  +I_F^2+2\int_0^T|Z_s|^2+\|U_s\|^2ds    +  \int_t^T K_1(s)|Y_s|^2ds \Biggr].
\end{align*}
Since $Y$ is a c\`adl\`ag process, we may apply \eqref{gronwall-backwards} from the appendix which
leads to
\begin{align*}
|Y_t|^2\leq&  R_0 e^{R_0 \int_0^T K_1(s)ds}      \Biggl[|\xi|^2 +I_F^2+2\int_0^T|Z_s|^2+\|U_s\|^2ds\\&+ \!\!\sup_{t\in{[0,T]}}\!\left(\left|\int_t^T Z_s dW_s\right|^2\!+\left|\int_{{]t,T]}\times\R_0}U_s(x)\tilde{N}(ds,dx)\right|^2\right) \Biggr].
\end{align*}
The inequality $(a+b)^2\leq 2a^2+2b^2$ and then Doob's martingale inequality used on 
\begin{align*}
\sup_{t\in{[0,T]}}\Bigg(&\bigg|\!\int_0^T Z_s dW_s-\!\int_0^t Z_s dW_s\bigg|^2\\
&+\bigg|\int_{{]0,T]}\times\R_0}U_s(x)\tilde{N}(ds,dx)-\int_{{]0,t]}\times\R_0}U_s(x)\tilde{N}(ds,dx)\bigg|^2\Bigg)
\end{align*}
yield, since a.s. $ R_0 \le 5 + C_K $ and   $\int_0^T K_1(s)ds \le C_K,$ 
\begin{align}\label{supreme}
\E\sup_{t\in{[0,T]}}|Y_t|^2
\leq\ c_1       \Biggl[\E|\xi|^2+  \E I_F^2+  12 \E\int_0^T\left(|Z_s|^2+\|U_s\|^2\right)ds\Biggr]
\end{align}
with 
\equal \label{the-constant}
c_1=(5 + C_K)e^{(5+C_K)C_K}.
\tionl
For a progressively measurable process $\eta$, which we will determine later, It\^o's formula implies that
\begin{align}\label{gammaeq1}
&{|Y_0|}^2+\int_0^T e^{\int_0^s\eta(\tau)d\tau}\left(\eta(s){|Y_s|}^2 +{|Z_s|}^2 +\|U_s\|^2\right)ds\nonumber\\
& =M(0)+e^{\int_0^T\eta(s)ds}|\xi|^2+\int_0^T 2e^{\int_0^s\eta(\tau)d\tau}Y_sf(s,Y_s,Z_s,U_s)ds,
\end{align}
where 
\begin{align}\label{mart}
M(t)=&-\int_t^T2e^{\int_0^s\eta(\tau)d\tau}Y_sZ_sdW_s\nonumber\\
&-\int_{{]t,T]}\times\R_0}2e^{\int_0^s\eta(\tau)d\tau}\left((Y_{s-}+ U_s(x))^2-Y_{s-}^2\right)\tilde{N}(ds,dx).
\end{align}
Provided  that $\left \| \int_0^T\eta(\tau)d\tau \right \|_{L^\infty(\mathbb{P})} < \infty,$ one gets  $\E M(t)=0$ as a consequence of  \eqref{supreme} and the 
Burkholder--Davis--Gundy inequality  (see, for instance, \cite[Theorem 10.36]{HeWangYan}), where the term $\left((Y_{s-}+ U_s(x))^2-Y_{s-}^2\right)^2$  appearing  
in the integrand can be estimated by
$$\left(|Y_{s-}+ U_s(x)|+|Y_{s-}|\right)^2 \left(|Y_{s-}+ U_s(x)|-|Y_{s-}|\right)^2  \le 4 \sup_{r \in [0,T]} |Y_r|^2  \,|U_s(x)|^2. $$ 
By \ref{2} and \eqref{algebra}, we have 
\equa
  |Y_s| |f(s,Y_s,Z_s,U_s)| &\le & |Y_s|[F(s)+K_1(s)|Y_s|+K_2(s)(|Z_s|+\|U_s\|) ]\\
   &\le &   F(s) |Y_s| + K_1(s)|Y_s|^2  +2R \frac{ K_2(s)^2 |Y_s|^2 }{2} \\
   &&+ \frac{|Z_s|^2+ \|U_s\|^2 }{2 R}.
\tion 
We use this estimate for $R=2,$  and taking the expectation in \eqref{gammaeq1}, we have 
\begin{align}\label{gammaeq2}
&\E\int_0^T e^{\int_0^s\eta(\tau)d\tau}\left(\eta(s){|Y_s|}^2 +{|Z_s|}^2 +\|U_s\|^2\right)ds\nonumber\\
& \leq \E e^{\int_0^T\eta(s)ds}|\xi|^2+\E\int_0^T e^{\int_0^s\eta(\tau)d\tau}\biggl(\frac{|Z_s|^2 +\|U_s\|^2}{2} \biggr)ds \\
&\quad+2\E\int_0^T e^{\int_0^s\eta(\tau)d\tau}F(s)ds\sup_{t\in{[0,T]}}|Y_t|\nonumber\\
&\quad +\E\int_0^T e^{\int_0^s\eta(\tau)d\tau} 2\left(K_1(s)+ 2K_2(s)^2\right)Y_s^2ds.\nonumber
\end{align}
Then, we choose $\eta(s)=2\left(K_1(s)+2K_2(s)^2\right)$ and subtract the terms containing $Y, Z,$ and $U$ from the left hand side of \eqref{gammaeq2}. 
Moreover, we apply the first inequality of \eqref{algebra} to the term containing the supremum. It follows that 
\begin{align}\label{gammaeq4}
&\E\int_0^T e^{\int_0^s\eta(\tau)d\tau}\left({|Z_s|}^2 +\|U_s\|^2\right)ds\nonumber\\
& \leq 2\E  \big [e^{\int_0^T\eta(s)ds}|\xi|^2\big ]+ 2R\E\left[\int_0^T e^{\int_0^s\eta(\tau)d\tau}F(s)ds\right]^2+\frac{2}{R}\E\sup_{t\in{[0,T]}}|Y_t|^2.
\end{align}

Note that 
$$\E\int_0^T \left({|Z_s|}^2 +\|U_s\|^2\right)ds\leq\E\int_0^T e^{\int_0^s\eta(\tau)d\tau}\left({|Z_s|}^2 +\|U_s\|^2\right)ds.$$
Hence, by \eqref{gammaeq4} and $\int_0^T\eta(\tau)d\tau \le 4C_K$ a.s., we have
\equal \label{Z-and-U-estimate}
\E\int_0^T \left({|Z_s|}^2 +\|U_s\|^2\right)ds  \leq 2 e^{4C_K}  \E |\xi|^2+ 2R e^{8C_K}   \E I_F^2+\frac{2}{R}\E\sup_{t\in{[0,T]}}|Y_t|^2.
\tionl
Now, we can plug in \eqref{Z-and-U-estimate} into  \eqref{supreme} and vice versa which yields for $R:= 48 c_1$ that
\begin{align*} 
\E\sup_{t\in{[0,T]}}|Y_t|^2\leq&\ (2c_1+48c_1e^{4C_K})\E|\xi|^2+ \left(2c_1+(48c_1)^2e^{8C_K}\right) \E  I_F^2,
\end{align*}
and
\equa
\E\!\!\int_0^T \!\!\!\left({|Z_s|}^2 +\|U_s\|^2\right)\!ds  \leq \!\left(\frac{1}{12} + 4 e^{4C_K}\right)\! \E |\xi|^2\!+\! \left(\frac{1}{12} + 192 c_1 e^{8C_K} \right)  \E I_F^2.
\tion
Using \eqref{the-constant} it is easy to see that there exists a constant $C_1>0$ such that each  factor in front of the expectations on the right 
side of the previous two 
inequalities  is less than
$e^{C_1(1+C_K)^2}.$

\end{proof}
Our next proposition will be an $L^2$ a-priori estimate  for BSDEs of our type. For the Brownian case, $L^p$ a-priori estimates are done for $p\in [1,\infty[$ 
in \cite{Briand-etal}, and for quadratic BSDEs, for $p \in [2,\infty[$ in \cite{Geiss}. For BSDEs with jumps, for  $p\in ]1,\infty[,$  see 
\cite{Kruse},\cite{Kruse17}; while \cite{Bech} contains an a-priori estimate w.r.t.~$L^\infty.$ The following assertion is similar to \cite[Proposition 2.2]{bbp}, but 
fits our extended setting. 
\begin{prop}\label{continuitythm}
Let $\xi,\xi'\in L^2$ and let $f,f'$ be two generator functions satisfying \ref{1}--\ref{3}, where the bounds in \ref{2} and the 
coefficients in \ref{3} may differ for $f$ and $f'$. The coefficients of $f'$ in \ref{3} will be referred to as 
$\alpha'$ and $\beta'$.
 Moreover, let the triplets $(Y,Z,U)$ and $(Y',Z',U')\in L^2(W)\times L^2(W)\times L^2(\tilde{N})$, satisfy the BSDEs $(\xi,f)$ and $(\xi',f'),$ 
 respectively. 

Then, 
\begin{align*}
&\|Y-Y'\|^2_{L^2(W)} +\left\|Z-Z'\right\|_{L^2(W) }^2 + \left\|U-U'\right\|_{L^2(\tilde N) }^2 \\
&\leq h\left(a,b,\E|\xi-\xi'|^2+2\E\!\int_0^T\!|Y_t-Y'_t|\left|f(t,Y_t,Z_t,U_t)-f'(t,Y_t,Z_t,U_t)\right|dt\right)\!,
\end{align*}
where  $a=\int_0^T\alpha'(s)ds,$  $b=\left\|\int_0^T\beta'(s)^2ds\right\|_\infty,$ and 
$$h:]0,\infty[\times]0,\infty[\times [0,\infty[\to [0,\infty[ $$ is a function such that 
$h(a,b,x)\to 0=h(a,b,0)$ if $x\to 0.$ 
\end{prop}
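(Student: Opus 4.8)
The plan is to run a weighted energy estimate on the difference process and reduce everything to a scalar Bihari--LaSalle inequality. Write $\Delta Y = Y-Y'$, $\Delta Z = Z-Z'$, $\Delta U = U-U'$, $\Delta\xi = \xi-\xi'$, and observe that $\Delta Y$ solves the BSDE with terminal value $\Delta\xi$ and driver $f(s,Y_s,Z_s,U_s)-f'(s,Y'_s,Z'_s,U'_s)$. First I would apply It\^o's formula to $e^{\int_0^s\eta(\tau)d\tau}|\Delta Y_s|^2$ for a process $\eta$ to be chosen, obtaining for each $t$
\begin{align*}
&e^{\int_0^t\eta(\tau)d\tau}|\Delta Y_t|^2 + \int_t^T e^{\int_0^s\eta(\tau)d\tau}\big(|\Delta Z_s|^2 + \|\Delta U_s\|^2 + \eta(s)|\Delta Y_s|^2\big)ds \\
&= e^{\int_0^T\eta(\tau)d\tau}|\Delta\xi|^2 + 2\int_t^T e^{\int_0^s\eta(\tau)d\tau}\Delta Y_s\big(f(s,Y_s,Z_s,U_s)-f'(s,Y'_s,Z'_s,U'_s)\big)ds + M(t),
\end{align*}
where $M$ is a stochastic-integral term. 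As in the proof of Proposition \ref{supprop}, that proposition itself upgrades $Y,Y'$ to $\mathcal S^2$, so a Burkholder--Davis--Gundy estimate yields $\E M(t)=0$ as soon as $\|\int_0^T\eta\|_\infty<\infty$, which holds for the choice below.

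The crucial step is to insert the driver $f'$ evaluated at the two solutions, writing $f(s,Y_s,Z_s,U_s)-f'(s,Y'_s,Z'_s,U'_s)$ as $\big(f'(s,Y_s,Z_s,U_s)-f'(s,Y'_s,Z'_s,U'_s)\big)+\big(f(s,Y_s,Z_s,U_s)-f'(s,Y_s,Z_s,U_s)\big)$. The first bracket, multiplied by $\Delta Y_s$, is controlled by the monotonicity condition \ref{3} for $f'$ (whose function I denote $\rho'$), giving $\alpha'(s)\rho'(|\Delta Y_s|^2)+\beta'(s)|\Delta Y_s|(|\Delta Z_s|+\|\Delta U_s\|)$; the second bracket produces exactly the source term $|\Delta Y_s|\,|f(s,Y_s,Z_s,U_s)-f'(s,Y_s,Z_s,U_s)|$ appearing in the third argument of $h$. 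On the $\beta'$-term I would apply the first inequality of \eqref{algebra} with weight $R=4$ to bound $2\beta'(s)|\Delta Y_s|(|\Delta Z_s|+\|\Delta U_s\|)$ by $4\beta'(s)^2|\Delta Y_s|^2+\tfrac12(|\Delta Z_s|^2+\|\Delta U_s\|^2)$. Choosing $\eta(s):=4\beta'(s)^2$ cancels the $|\Delta Y_s|^2$ contribution against the $\eta(s)|\Delta Y_s|^2$ on the left, while the $Z,U$ contribution is absorbed, leaving half of $|\Delta Z_s|^2+\|\Delta U_s\|^2$ on the left. Since $\int_0^T\eta = 4\int_0^T\beta'^2\le 4b$ a.s., the weight satisfies $1\le e^{\int_0^s\eta(\tau)d\tau}\le e^{4b}$ and $\E M(t)=0$ is legitimate.

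Taking expectations, writing $g(t):=\E|\Delta Y_t|^2$, dropping the nonnegative $Z,U$ term and using concavity of $\rho'$ through Jensen ($\E\rho'(|\Delta Y_s|^2)\le\rho'(g(s))$) would give
\begin{align*}
g(t)\le e^{4b}\Big(\E|\Delta\xi|^2 + 2\E\!\int_0^T\!|\Delta Y_s|\,|f-f'|(s,Y_s,Z_s,U_s)ds\Big) + 2e^{4b}\!\int_t^T\!\alpha'(s)\rho'(g(s))ds,
\end{align*}
that is, $g(t)\le e^{4b}x + 2e^{4b}\int_t^T\alpha'(s)\rho'(g(s))ds$ with $x$ the third argument of $h$. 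The backward Bihari--LaSalle inequality (Proposition \ref{bihari-prop}) then bounds $\sup_t g(t)$ by a quantity $h_1(a,b,x)$ depending only on $a=\int_0^T\alpha'$, $b$ and $x$, and because $\int_{0^+}1/\rho'=\infty$ one has $h_1(a,b,x)\to 0=h_1(a,b,0)$ as $x\to0$; integrating in $t$ yields $\|Y-Y'\|^2_{L^2(W)}\le T\,h_1(a,b,x)$. Finally, evaluating the absorbed inequality at $t=0$ and using $g(s)\le\sup_t g$ with $\rho'$ nondecreasing, the $Z,U$ norms are bounded by $2e^{4b}x+4e^{4b}a\,\rho'(h_1(a,b,x))$, which vanishes as $x\to0$ by continuity of $\rho'$ at $0$ with $\rho'(0)=0$. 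Collecting the three contributions defines $h$.

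The It\^o computation and the Young-type absorption are routine; the delicate part is constructing $h$ with the stated limiting behaviour. The decisive input is $\int_{0^+}1/\rho'=\infty$, which through Bihari--LaSalle forces $g\equiv0$ when $x$ vanishes and, more importantly, gives continuity of the bound at $x=0$; I would verify this by checking that the generating function $G(r)=\int_{\cdot}^{r}ds/\rho'(s)$ satisfies $G(0^+)=-\infty$, so that $G^{-1}$ sends the perturbed argument to $0$ as $x\downarrow0$. The only other subtlety is transferring smallness from $Y$ to the $Z$- and $U$-norms, which works precisely because the $t=0$ estimate involves $\rho'$ evaluated at the already small bound for $\sup_t g$, so continuity of $\rho'$ at the origin closes the argument. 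I expect no trouble with $\E M=0$, as Proposition \ref{supprop} supplies $Y,Y'\in\mathcal S^2$ and the integrands are handled by Burkholder--Davis--Gundy exactly as there.
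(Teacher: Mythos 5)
Your proposal is correct and follows essentially the same route as the paper's proof: It\^o's formula with the weight $e^{\int_0^s 4\beta'(\tau)^2 d\tau}$, the insertion of $f'(s,Y_s,Z_s,U_s)$ to split off the monotonicity term from the source term, Young's inequality to absorb the $\beta'$-cross term, the backward Bihari--LaSalle inequality for $\sup_t\E|\Delta Y_t|^2$, and finally the transfer to the $Z$- and $U$-norms via $\rho'$ evaluated at that bound. The only (immaterial) difference is that you perform a single Young absorption retaining half of $|\Delta Z|^2+\|\Delta U\|^2$ on the left, where the paper runs the estimate twice with $R=1$ and $R=\tfrac12$.
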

\begin{proof}
We start with the following observation gained by It\^o's formula for the difference of the BSDEs $(\xi,f)$ and $(\xi',f')$. We denote differences 
of expressions by $\Delta$. If  $\eta=4 \beta'(s)^2,$   we have analogously to \eqref{gammaeq1}
\begin{align} \label{delta-ito}
& e^{\int_0^t\eta(s)ds}|\Delta Y_t|^2+\int_t^T e^{\int_0^s\eta(\tau)d\tau}\left(\eta(s)|\Delta Y_s|^2 +|\Delta Z_s|^2+ \|\Delta U_s\|^2\right)ds \notag\\
&=e^{\int_0^T\eta(s)ds}|\Delta\xi|^2+M(t)  \notag \\
& \quad+\int_t^T 2e^{\int_0^s\eta(\tau)d\tau}\Delta Y_s \,(f(s,Y_s,Z_s,U_s)-f'(s,Y'_s,Z'_s,U'_s))ds,
\end{align}
where 
\begin{align*}
M(t) =& -\int_t^T 2e^{\int_0^s\eta(\tau)d\tau}\Delta Y_s\Delta Z_s dW_s \\
&-\int_{{]t,T]}\times\R_0}2e^{\int_0^s\eta(\tau)d\tau} \left((\Delta Y_{s-}+\Delta U_s(x))^2-\Delta Y_{s-}^2\right)\tilde{N}(ds,dx).
\end{align*}  By the same reasoning as for  \eqref{mart}, we have $\mathbb{E}M(t)=0$. 
We now proceed with the (standard) arguments similar to those used for \eqref{gammaeq1}--\eqref{gammaeq2}. By \ref{3}  and the first inequality 
from \eqref{algebra}, 
\begin{align} \label{R-ineq}
\Delta Y_s \,& (f'(s,Y_s,Z_s,U_s)-f'(s,Y'_s,Z'_s, U'_s)) \notag \\ 
\le & \alpha'(s)\rho(|\Delta Y_s|^2)+\beta'(s)|\Delta Y_s|(|\Delta Z_s|+\|\Delta U_s\|) \notag\\
\le & \alpha'(s)\rho(|\Delta Y_s|^2)+   \frac{ \beta'(s)^2|\Delta Y_s|^2}{R} +\frac{R (|\Delta Z_s|^2+\|\Delta U_s\|^2)}{2}. 
\end{align}
Taking the expectation in \eqref{delta-ito} and then using \eqref{R-ineq} with  $R=1$ (such that we can cancel out the terms with $Z$ and $U$  on the left  side), 
leads to 
\begin{align*}
& \E e^{\int_0^t\eta(s)ds}|\Delta Y_t|^2+\E \int_t^T e^{\int_0^s\eta(\tau)d\tau}\eta(s)|\Delta Y_s|^2ds\\
& \leq\E e^{\int_0^T\eta(s)ds}|\Delta\xi|^2 +\E\int_t^T 2 e^{\int_0^s\eta(\tau)d\tau}\Delta Y_s \cdot(\Delta f)(s,Y_s,Z_s,U_s)ds\\
& \quad\quad\quad\quad+\E\int_t^T e^{\int_0^s\eta(\tau)d\tau}\left(2\alpha'(s)\rho(|\Delta Y_s|^2)+ \beta'(s)^2|\Delta Y_s|^2\right)ds.
\end{align*}
The choice $\eta(s)=4 \beta'(s)^2$ and   the fact that  $ \int_0^T\beta'(s)^2ds \le b$  a.s.  leads to
\begin{align*}
 E|\Delta Y_t|^2\leq& e^{4b}\left(\E |\Delta\xi|^2 +\E\int_t^T 2|\Delta Y_s| |(\Delta f)(s,Y_s,Z_s,U_s)|ds\right)\\
&+e^{4b}\int_t^T 2\alpha'(s)\rho(\E|\Delta Y_s|^2)ds,
\end{align*}
since ${\rho}$ is a concave function.

By Proposition \ref{bihari-prop}, a backward version of the Bihari--LaSalle inequality, shows 
\begin{align}\label{bihari}
 &\sup_{t\in{[0,T]}}\E|\Delta Y_t|^2 \nonumber\\
 &\leq G^{-1}\Biggl \{G\bigg[e^{4b}\left(\E|\Delta\xi|^2 +\E\int_0^T 2|\Delta Y_s| |(\Delta f)(s,Y_s,Z_s,U_s)|ds\right)\bigg]\\
 &\quad\quad\quad\quad\quad\quad\quad\quad\quad\quad\quad\quad\quad+2e^{4b}\int_0^T \alpha'(s)ds\Biggr \},\nonumber
\end{align}
where $G(x)=\int_1^x\frac{1}{{\rho}(h)}dh.$

If we  take the expectation in \eqref{delta-ito}  but choose this time \eqref{R-ineq} with  $R=\frac{1}{2}$  and omit $\E e^{\int_0^t\eta(s)ds}|\Delta Y_t|^2,$ then 
\begin{align*}
& \E\int_t^T e^{\int_0^s\eta(\tau)d\tau}\left(\eta(s)|\Delta Y_s|^2+|\Delta Z_s|^2+\|\Delta U_s\|^2\right)ds\\
\leq \, & \E e^{\int_0^T\eta(s)ds}|\Delta\xi|^2 +\E\int_t^T 2 e^{\int_0^s\eta(\tau)d\tau}\Delta Y_s \cdot(\Delta f)(s,Y_s,Z_s,U_s)ds\\
&+\E \bigg \{\int_t^T\! e^{\int_0^s\eta(\tau)d\tau}\bigg(2\alpha'(s)\rho(|\Delta Y_s|^2)+ 4 \beta'(s)^2|\Delta Y_s|^2\\
&\quad\quad\quad\quad\quad\quad\quad\quad\quad\quad\quad\quad\quad\quad\quad\quad\quad\quad +\frac{|\Delta Z_s|^2+\|\Delta U_s\|^2}{2}\bigg)ds \bigg \}.
\end{align*}
We  subtract the quadratic terms with  $\Delta Y, \Delta Z,$ and $\Delta U$ which appear on the right hand side.  This results in the inequality 
\begin{align*}
& \E\int_t^T e^{\int_0^s\eta(\tau)d\tau}\left(|\Delta Z_s|^2+\|\Delta U_s\|^2\right)ds\\
&\leq 2\Biggl(\E e^{\int_0^T\eta(s)ds}|\Delta\xi|^2 +\E\int_t^T 2 e^{\int_0^s\eta(\tau)d\tau}|\Delta Y_s| \cdot|(\Delta f)(s,Y_s,Z_s,U_s)|ds\\
&\quad\quad+\E\int_t^T\! e^{\int_0^s\eta(\tau)d\tau}2\alpha'(s)\rho(|\Delta Y_s|^2))ds\Biggr).
\end{align*}
We  continue our estimate by 
\begin{align}\label{ZandUestim}
&\E\int_t^T e^{\int_0^s\eta(\tau)d\tau}\left(|\Delta Z_s|^2+\|\Delta U_s\|^2\right)ds \nonumber\\
&\leq 2e^{4b}\Bigg(\E|\Delta\xi|^2 +\E\int_t^T 2|\Delta Y_s| \cdot|(\Delta f)(s,Y_s,Z_s,U_s)|ds\\
&\quad +2\int_t^T \alpha'(s)ds \, {\rho}\left(\sup_{s\in{[0,T]}}\E|\Delta Y_s|^2\right)\Bigg),\nonumber
\end{align}
since $\eta(s)=4\beta'(s)^2$. 
We put 
\begin{align*}
 H:=&  G^{-1}\Biggl\{G\bigg[e^{4b}\left(\E|\Delta\xi|^2 +\E\int_0^T 2|\Delta Y_s| |(\Delta f)(s,Y_s,Z_s,U_s)|ds\right)\bigg] \\
 &+2e^{4b}\int_0^T \alpha'(s)ds\Biggr \}
\end{align*}
so that  \eqref{bihari} reads now as
 $ \sup_{t\in{[0,T]}}\E|\Delta Y_t|^2 \leq  H.$   If we add this inequality to  \eqref{ZandUestim} and note that 
 ${\rho}\left(\sup_{s\in{[0,T]}}\E|\Delta Y_s|^2\right)\le \rho(H),$ we have
\begin{align*}
&\hspace{-2em} \sup_{t\in{[0,T]}}\E|\Delta Y_t|^2+\E\int_0^T |\Delta Z_s|^2ds+\E\int_0^T \|\Delta U_s\|^2ds \\
\leq& 2e^{4b}\left(\E|\Delta\xi|^2+\E\int_0^T 2|\Delta Y_s| \cdot|(\Delta f)(s,Y_s,Z_s,U_s)|ds\right)\\
&+\left(2e^{4b}\int_0^T\alpha'(s)ds+1\right)\cdot (\mathrm{id}+{\rho})(H).
\end{align*}
Note that the integral condition on $\rho$ implies that, if the argument of $G$ approaches zero, then the right hand side vanishes.
\end{proof}

The following Lemma will be used to estimate the expectation of integrals which contain $|Y_s|^2.$

\begin{lemma} \label{lemma-eta-trick}
Let  $\xi\in L^2$ and assume that  \ref{1} and \ref{2} hold. If $(Y,Z,U)$ is a solution to  $(\xi,f)$ 
and $H$ is a nonnegative, progressively measurable process with $\left \|\int_0^T H(s)ds \right\|_\infty < \infty,$ 
then 
\begin{align}  \label{eta-trick}
\E \int_0^T H(s)  |Y_s|^2ds\leq \,&   e^{2C_K}   \E\int_0^T H(s)ds|\xi|^2\nonumber \\
&+ 2   e^{2C_K}  \left \|\int_0^T H(s)ds \cdot I_F\right\|_2\|Y\|_{\mathcal{S}^2}. 
\end{align}
\end{lemma}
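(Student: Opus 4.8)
The plan is to run the weighted-It\^o / ``$\eta$-trick'' computation already used for \eqref{gammaeq1}, but to stop at a pointwise-in-$t$ conditional estimate for $Y_t^2$ instead of integrating everything away, and only afterwards to test the resulting bound against $H$. Concretely, I would apply It\^o's formula to $e^{\int_0^t\eta(\tau)d\tau}Y_t^2$ on $[t,T]$ with the weight $\eta(s):=2(K_1(s)+K_2(s)^2)$, exactly as for \eqref{gammaeq1}--\eqref{mart}; this is admissible since $\|\int_0^T\eta\|_\infty\le 2C_K<\infty$ by \eqref{C-K}. Taking the conditional expectation $\E[\,\cdot\,|\mathcal{F}_t]$, the stochastic-integral term $M(t)$ drops out, its vanishing following from $(Y,Z,U)\in\mathcal{S}^2\times L^2(W)\times L^2(\tilde N)$ by the same Burkholder--Davis--Gundy argument already invoked after \eqref{mart}. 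This yields
$$ e^{\int_0^t\eta}Y_t^2+\E\Big[\int_t^T e^{\int_0^s\eta}\big(\eta(s)Y_s^2+|Z_s|^2+\|U_s\|^2\big)ds\,\Big|\,\mathcal{F}_t\Big]=\E\Big[e^{\int_0^T\eta}\xi^2+\int_t^T 2e^{\int_0^s\eta}Y_sf(s,Y_s,Z_s,U_s)\,ds\,\Big|\,\mathcal{F}_t\Big]. $$

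Next I would bound the generator term using \ref{2} together with Young's inequality \eqref{algebra} (with $R=1$) in the sharp form $2K_2(s)|Y_s|(|Z_s|+\|U_s\|)\le |Z_s|^2+\|U_s\|^2+2K_2(s)^2Y_s^2$, which gives
$$ 2Y_sf(s,Y_s,Z_s,U_s)\le 2|Y_s|F(s)+\eta(s)Y_s^2+|Z_s|^2+\|U_s\|^2 . $$
The whole point of this particular split is that the $\eta(s)Y_s^2$ and $|Z_s|^2+\|U_s\|^2$ contributions produced on the right match exactly those present on the left of the It\^o identity, so after the conditional expectation they cancel (all of them being integrable, again by $(Y,Z,U)\in\mathcal{S}^2\times L^2(W)\times L^2(\tilde N)$). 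Dividing by the $\mathcal{F}_t$-measurable factor $e^{\int_0^t\eta}$, estimating every exponential by $e^{\int_0^T\eta}\le e^{2C_K}$ and each $|Y_s|$ by $\sup_r|Y_r|=:Y^*$, and using $\int_t^T F(s)ds\le I_F$, leaves the clean pointwise bound
$$ Y_t^2\le e^{2C_K}\,\E\big[\xi^2+2Y^*I_F\,\big|\,\mathcal{F}_t\big],\qquad t\in[0,T]. $$

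Finally I would test this against $H$. Since $H_t\ge0$ is $\mathcal{F}_t$-measurable, multiplying by $H_t$ and taking expectations gives $\E[H_tY_t^2]\le e^{2C_K}\,\E[H_t(\xi^2+2Y^*I_F)]$; integrating in $t$ and using Fubini to factor out $\int_0^TH_t\,dt$ produces
$$ \E\!\int_0^T\!H_tY_t^2\,dt\le e^{2C_K}\,\E\Big[\Big(\!\int_0^T\!H_t\,dt\Big)\xi^2\Big]+2e^{2C_K}\,\E\Big[\Big(\!\int_0^T\!H_t\,dt\Big)I_F\,Y^*\Big]. $$
A single Cauchy--Schwarz inequality on the last expectation, $\E[(\int_0^TH\,dt)I_F\,Y^*]\le\|(\int_0^TH\,dt)I_F\|_2\,\|Y^*\|_2=\|\int_0^TH\,dt\cdot I_F\|_2\,\|Y\|_{\mathcal{S}^2}$, then gives exactly \eqref{eta-trick}.

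I expect the only genuinely delicate points to be the justification of $\E[M(t)|\mathcal{F}_t]=0$ and of the finiteness of the mutually cancelling $Z$- and $U$-integrals; both are settled by the a-priori membership $(Y,Z,U)\in\mathcal{S}^2\times L^2(W)\times L^2(\tilde N)$ of a solution, via the BDG estimate already used in Proposition \ref{supprop}. Everything else is algebra, and the combination that makes the proof succeed is the choice $\eta=2(K_1+K_2^2)$ paired with the Young constant $R=1$: it is precisely this matching that annihilates the $Z$- and $U$-terms and accounts for the factor $e^{2C_K}$ in the final estimate.
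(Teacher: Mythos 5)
Your proof is correct, and it reaches \eqref{eta-trick} by a route that is organized differently from the paper's, although both rest on the same core computation (It\^o's formula for $e^{\int_0^\cdot\eta}Y^2$ with $\eta=2(K_1+K_2^2)$ and the Young split that exactly cancels the $Z$- and $U$-terms). The paper never passes to a pointwise bound: it multiplies the \emph{differential} form of the identity \eqref{gammaeq1} by the accumulated weight $\int_0^sH(\tau)d\tau$ via integration by parts applied to $\int_0^TH(s)ds\cdot e^{\int_0^T\eta(r)dr}|Y_T|^2$, takes plain expectations (the term $\int_0^T(\int_0^sH(\tau)d\tau)\,dM(s)$ being a true martingale increment since $\int_0^\cdot H$ is bounded), and reads off $\E\int_0^TH(s)e^{\int_0^s\eta}|Y_s|^2ds$ directly on the left before estimating $2\E\int_0^T(\int_0^sH)e^{\int_0^s\eta}F(s)|Y_s|ds$ by Cauchy--Schwarz. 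You instead first extract the conditional estimate $Y_t^2\le e^{2C_K}\,\E[\xi^2+2Y^*I_F\mid\mathcal{F}_t]$ and only then test against $H$, using that progressive measurability makes $H_t$ an $\mathcal{F}_t$-measurable nonnegative weight so the tower property turns $\E[H_t\E[\,\cdot\mid\mathcal{F}_t]]$ into $\E[H_t(\,\cdot\,)]$; Tonelli and one Cauchy--Schwarz then finish. Your version requires the slightly stronger input that the stochastic integrals are uniformly integrable martingales (so that $\E[M(t)\mid\mathcal{F}_t]=0$, not merely $\E M(t)=0$), but this is covered by the same BDG argument the paper invokes after \eqref{mart}; in exchange you obtain the pointwise conditional bound on $Y_t^2$ as a reusable intermediate result, while the paper's integration-by-parts device works for any jointly measurable nonnegative $H$ with $\int_0^\cdot H$ adapted and bounded without ever conditioning. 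Both arguments land on the identical constants $e^{2C_K}$ and $2e^{2C_K}$.
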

\begin{proof}   

From the relations  \eqref{gammaeq1}, \eqref{mart} and  integration by parts applied to the term $\int_0^T H(s)ds \cdot   e^{\int_0^T \eta(s)ds}|Y_T|^2, $
we  get
\equa
&& \hspace{-1em}\int_0^T H(s)ds \cdot   e^{\int_0^T \eta(r)dr}|Y_T|^2 \\ 
&=& \int_0^T H(s)  e^{\int_0^s \eta(\tau)d\tau}|Y_s|^2 ds  -  \int_0^T \biggl(\int_0^s H(\tau)d\tau\biggr) \,\,dM(s)\\
&&+ \int_0^T \biggl(\int_0^s H(r)dr \biggr)  e^{\int_0^s\eta(\tau)d\tau} \Big (\eta(s){|Y_s|}^2 +{|Z_s|}^2 +\|U_s\|^2  \\
&& \hspace{18em} -2Y_sf(s,Y_s,Z_s,U_s) \Big)ds. 
\tion
We take expectations  and rearrange the equation so that 
\begin{align*}
&\hspace{-3em} \E\!\int_0^T\!  H(s) e^{\int_0^s \eta(\tau)d\tau} |Y_s|^2ds \\
\leq& \,\E\left[\int_0^T \! H(s)ds\cdot  e^{\int_0^T \eta(s)ds}|\xi|^2\right]\\
&+\E\biggl[\int_0^T\biggl(\int_0^sH(\tau)d\tau\biggr) e^{\int_0^s \eta(\tau)d\tau} \big( 2Y_sf(s,Y_s,Z_s,U_s) \\
&\hspace{8em} -\eta(s)|Y_s|^2-|Z_s|^2-\|U_s\|^2\big)ds\biggr].
\end{align*}
By assumption \ref{2} and \eqref{algebra}, we have
\equa
&&2Y_sf(s,Y_s,Z_s,U_s)  \\
&&\le 2 |Y_s| F(s)+2K_1(s)|Y_s|^2+ 2K_2(s) |Y_s| (|Z_s|+\|U_s\|) \\
&&\le 2 |Y_s| F(s)+2K_1(s)|Y_s|^2+ 2 K_2(s)^2  |Y_s|^2 +  |Z_s|^2+\|U_s\|^2, 
\tion
so that for $\eta(s)=2K_1(s) + 2 K_2(s)^2$ it follows
\begin{align} \label{eta-half-trick}
\E\!\int_0^T\! H(s)  |Y_s|^2ds
\leq \,&   \E\biggl[\int_0^T H(s)ds\cdot  e^{\int_0^T \eta(s)ds}|\xi|^2\biggr]\nonumber\\
&+ 2\E\biggl[\int_0^T\biggl(\int_0^sH(\tau)d\tau\biggr) e^{\int_0^s \eta(\tau)d\tau}F(s)|Y_s|ds\biggr] \nonumber\\
\leq \,& e^{2C_K}   \E\biggl[\int_0^T H(s)ds \cdot |\xi|^2\biggr]\nonumber \\
&+ 2   e^{2C_K}  \left \|\int_0^T H(s)ds\cdot I_F\right\|_2\|Y\|_{\mathcal{S}^2}.
\end{align}
\end{proof}

\section{Proofs of Theorems \ref{existence}    and \ref{finlev}} \label{sec5}
\subsection{Proof of Theorem \ref{existence}}
\label{4-2}
\texttt{Step 1:} Uniqueness\\
Uniqueness of the solution is a consequence of Proposition \ref{continuitythm}, since the terms $|\xi-\xi'|$ and $|f(s,Y_s,Z_s,U_s)-f'(s,Y_s,Z_s,U_s)|$ are zero.
\bigskip

The proof of existence will be split up in further steps.

\texttt{Step 2:} \\
In this step, we construct an approximating sequence of generators $f^{(n)}$ for $f$ and show several estimates for the solution processes 
$(Y^{n},Z^{n}, U^{n})$ to the BSDEs $(\xi,f^{(n)})$.\bigskip

For $n \ge 1,$ define $c_n(z):=\min(\max(-n,z),n)$ and $\tilde{c}_n(u)\in L^2(\nu)$ to be the projection of $u$ onto $\{v\in{L^2(\nu)}:\|v\| \leq n\}$. 
Let $(Y^{n},Z^{n}, U^{n})$ be the unique solution of the BSDE $(\xi,f^{(n)})$, with the definitions 
$$\hat{f}^{(n)}(\omega,s,y,z,u):=f(\omega,s,y,c_n(z),\tilde c_n(u)),$$ 
and 
\begin{align*}
&f^{(n)}(\omega,s,y,z,u):=\mathrm{sign}\left(\hat{f}^{(n)}(\omega,s,y,z,u)\right)\\
&\times\left [ F(\omega, s)\wedge n +(K_1(\omega,s)\wedge n)|y| +(K_2(\omega,s)\wedge n)(|c_n(z)|+\left\|\tilde{c}_n(u)\right\|)\right]
\end{align*}  
\begin{align*}  \text{ if }  \quad 
|\hat{f}^{(n)}(\omega,s,y,z,u)|>\ &F(\omega, s)\wedge n
+(K_1(\omega,s)\wedge n)|y|\\  \nopagebreak[4]
&+(K_2(\omega,s)\wedge n)(|c_n(z)|+\|\tilde{c}_n(u)\|),
\end{align*}
and $$f^{(n)}(\omega,s,y,z,u):=\hat{f}^{(n)}(\omega,s,y,z,u)  \quad \text{else.}$$
{\ch Note that $f^{(n)}$ satisfies \ref{1}--\ref{4}, with the same coefficients as $f.$ 
Moreover, by \ref{4}, $f^{(n)}$  satisfies a Lipschitz condition with respect to $u$ (see Remark \ref{remark}).   
 Thus,  thanks to \cite[Theorem 2.1]{YinMao}, $(\xi,f^{(n)})$ has a unique solution  $(Y^n,Z^n,U^n).$ 
Moreover,} by Proposition \ref{supprop}, we get that 
\equal \label{boundedYZU}
  \|Y^n\|^2_{\mathcal{S}^2} +\left\|Z^n\right\|_{L^2(W) }^2 + \left\|U^n\right\|_{L^2(\tilde N) }^2 
 \leq e^{C_1(1+ C_K)^2} (\E|\xi|^2+\E I_F^2)<\infty,
\tionl
uniformly in $n.$
This  implies that the families \begin{samepage} $$\left({\sup_{t\in{[0,T]}}|Y_t^n|},n\geq 0\right), \left({|Y^n|},n\geq 0\right)\text{ and }\left(|Z^n|+\|U^n\|,n\geq 0\right)\nopagebreak[4] $$  are uniformly integrable with respect to $\mathbb{P}$, $\mathbb{P}\otimes\lambda$ and $\mathbb{P}\otimes\lambda$, respectively. 
\end{samepage}

\texttt{Step 3:}\\
The goal of this step is to use Proposition \ref{continuitythm} to get convergence of $(Y^n,Z^n,U^n)_n$ in $L^2(W)\times L^2(W)\times L^2(\tilde N)$ 
for a subsequence $n_k \uparrow \infty$  if  $\delta_{n_k,n_l} \to 0$ for $k>l \to\infty,$  where 
\equa
\delta_{n,m}&:=&  \E\int_0^T |Y^n_s-Y^m_s||f^{(n)}(s,Y^n_s,Z^n_s,U^n_s)-f^{(m)}(s,Y^n_s,Z^n_s,U^n_s)|ds.\\
\tion 
We observe that the difference of the generators is zero if two conditions are satisfied at the same time: First, if $|Z^n|,\|U^n_s\|<n$, and additionally, by the cut-off procedure for ${F}, K_1, K_2$, if 
$$n>\max\left({F}(\omega,s),K_1(\omega,s), K_2(\omega,s)\right)=:k(\omega,s).$$ Thus, putting 
\equal \label{chi}
\chi_n (s) :=
\chi_{\{ |Z^n_s|> n\} \cup \{\|U^n_s\| >n\} \cup \{k(s)>n \}},
\tionl
 we have
\begin{align*} 
\delta_{n,m} &= \E\int_0^T |Y^n_s-Y^m_s||f^{(n)}(s,Y^n_s,Z^n_s,U^n_s)-f^{(m)}(s,Y^n_s,Z^n_s,U^n_s)|\chi_n(s)ds\\
&\le \E \bigg \{\int_0^T 2|Y^n_s-Y^m_s|\, \chi_n(s)\\
&\hspace{6em} \times\left({F}(s)+K_1(s)|Y^n_s|+K_2(s)(|Z^n_s|+\|U^n_s\|)\right)ds \bigg \},\nonumber
\end{align*}
due to the linear growth condition \ref{2}.  We estimate this further by 
\begin{align} \label{A2-used} 
\delta_{n,m} &\le   \E\int_0^T\chi_n(s)\,  F(s)ds  \,  \left(\sup_{r\in{[0,T]}}| Y^n_r|+\sup_{r\in{[0,T]}}|Y^m_r|\right) \nonumber \\
&\quad +   \E\int_0^T\chi_n(s)\,K_2(s)(| Y^n_s|+|Y^m_s|)(|Z^n_s|+\|U^n_s\|)ds   \nonumber \\
&\quad +\E\int_0^T 2|Y^n_s-Y^m_s|\, |Y^n_s|   \chi_n(s) \,K_1(s)ds \nonumber\\
&=: \delta^{(1)}_{n,m} + \delta^{(2)}_{n,m}+ \delta^{(3)}_{n,m}.
\end{align}
For $\delta^{(1)}_{n,m},$  we use the Cauchy--Schwarz inequality,
$$ \delta^{(1)}_{n,m} \le 2 \left ( \E \left |\int_0^T\chi_n(s)\,  F(s)ds\right |^2 \right )^\frac{1}{2}   (\|Y^n\|_{\mathcal{S}^2}+\|Y^m\|_{\mathcal{S}^2}). $$
Since 
 $\sup_n \|Y^n\|_{\mathcal{S}^2}< \infty$  according to \eqref{boundedYZU}, it remains to show that 
the integral term converges to $0$ for a subsequence.

Since $|Z^n_s|$ and $\|U^n_s\|$ are uniformly integrable w.r.t. $\mathbb{P}\otimes\lambda,$
we imply from \eqref{chi}  that $\chi_n \to 0$ in $L^1(\mathbb{P}\otimes\lambda).$  Hence, there exists a subsequence 
$(n_k)_{k\ge 1}$ such that 
\equal \label{chi-to-0}
\chi_{n_k} \to 0  \quad k\to\infty, \quad\mathbb{P}\otimes\lambda \text{-a.e.} 
\tionl
 By dominated convergence, 
we have $ \E\left | \int_0^T\chi_{n_k}(s) F(s)ds \right |^2 \to 0 $ for $k\to\infty$    since $F\in L^2(\Omega; L^1([0,T])).$  \\
For $ \delta^{(2)}_{n,m},$ we start with the Cauchy--Schwarz inequality and get 
 \equa 
\delta^{(2)}_{n,m}   &\le& 2 \sup_k \left [ \left\|Z^k\right\|_{L^2(W) } + \left\|U^k\right\|_{L^2(\tilde N) }\right] \\
&&\times \left [ \E\int_0^T\chi_n(s)\,K_2(s)^2 (| Y^n_s|^2 +|Y^m_s|^2)ds\right]^\frac{1}{2}.  
\tion
By Lemma \ref{lemma-eta-trick}, 
\equal \label{eta-trick-used}
&& \E\int_0^T\chi_n(s)\,K_2(s)^2 (| Y^n_s|^2 +|Y^m_s|^2)ds \nonumber \\
&& \quad\le 2e^{2C_K}   \E\int_0^T\chi_n(s)\,K_2(s)^2ds |\xi|^2  \\
&&\quad+ 2e^{2C_K}\left\|\int_0^T\chi_n(s)\,K_2(s)^2ds \cdot I_F \right\|_2( \|Y^n\|_{\mathcal{S}^2}+ \|Y^m\|_{\mathcal{S}^2} ). \notag
\tionl
Hence,  \eqref{chi-to-0} implies $\delta^{(2)}_{n_k,m} \to 0$ for $k\to \infty.$\\
Finally,
\equa
\delta^{(3)}_{n,m} \le 2\E\int_0^T (2|Y^n_s|^2 + |Y^m_s|^2 )\,   \chi_n(s) \,K_1(s)ds,
 \tion
so that we can argue like in \eqref{eta-trick-used} to get that  $\delta^{(3)}_{n_k,m} \to 0$ for $k\to \infty.$\\
 Thus  $(Y^{n_k},Z^{n_k},U^{n_k})_{k\ge1}$ converges to an object $(Y,Z,U)$ in $L^2(W)\times L^2(W)\times L^2(\tilde{N})$.\bigskip

\texttt{Step 4:}\\
In the final step, we want to show that $(Y,Z,U)$ solves $(\xi,f)$. For the approximating sequence $(Y^{n_k},Z^{n_k},U^{n_k})_{k\ge1},$
the stochastic integrals and the left hand side of the BSDEs $(\xi,f^{(n_k)})$ obviously converge in $L^2$ to the corresponding terms of $(\xi,f)$. Therefore, this 
subsequence of $(\int_t^T f^{(n)}(s,Y^{n}_s,Z^{n}_s,U^{n}_s)ds)_{n=1}^\infty$ converges to a random variable $V_t$. We need to show that $V_t=\int_t^T f(s,Y_s,Z_s,U_s)ds$. To achieve this, consider
\begin{align}\label{differences}
 \delta_n:=&\E  \int_t^T |f^{(n)}(s,Y^{n}_s,Z^{n}_s,U^{n}_s)  - f(s,Y^{n}_s,Z^{n}_s,U^{n}_s)|ds \notag\\
&+ \E\int_t^T| f(s,Y^{n}_s,Z^{n}_s,U^{n}_s)-f(s,Y_s,Z_s,U_s)|ds.
\end{align}
We start with the first integrand where, by the definition of $f_n$ and \eqref{chi}, and the growth condition \ref{2}, 
\equa
&& |f^{(n)}(s,Y^{n}_s,Z^{n}_s,U^{n}_s)  - f(s,Y^{n}_s,Z^{n}_s,U^{n}_s)| \\
&&\quad = |f^{(n)}(s,Y^{n}_s,Z^{n}_s,U^{n}_s)  - f(s,Y^{n}_s,Z^{n}_s,U^{n}_s)| \chi_n \\
&&\quad \le 2\left( F(s) \chi_n(s) +K_1(s)|Y^{n}_s| \chi_n(s) +K_2(s)\chi_n(s)(|Z^{n}_s|+\|U^{n}_s\|)\right)\\
&&\quad =: 2(\kappa^{(1)}_n(s) +\kappa^{(2)}_n(s) +\kappa^{(3)}_n(s)).
\tion 
The estimates are similar as in the previous step. Thanks to \eqref{chi-to-0}, we have \\
$\E  \int_t^T\kappa^{(1)}_{n_k}(s)ds \to 0.$
For the next term, the Cauchy--Schwarz inequality yields
\equa
\E  \int_t^T\kappa^{(2)}_{n_k}(s)ds  \le  \left    \|\int_0^T\chi_n(s)\,K_1(s)ds \right\|_2   \sup_l \|Y^l\|_{\mathcal{S}^2},
\tion
so that  by \eqref{chi-to-0} the first factor converges to zero along  the subsequence $(n_k).$ 
The last term we estimate using the Cauchy--Schwarz inequality  w.r.t. $\PP\otimes \lambda,$
\equa
\E \! \int_t^T\!\kappa^{(3)}_{n_k}(s)ds  \le  \left[  \E \int_0^T\!\!K_2(s)^2\chi_n(s) ds \right ]^\frac{1}{2}     \sup_l \left [ \left\|Z^l\right\|_{L^2(W) } \!\!+ \left\|U^l\right\|_{L^2(\tilde N) }\right],
\tion
 and again  by \eqref{chi-to-0},  we have convergence to zero along  the subsequence $(n_k).$ 
 
We continue  showing the convergence of the second term in \eqref{differences}. We extract a sub-subsequence of $(n_{k})_{k\geq 1}$, which we call---slightly abusing the notation---again $(n_{k})_{k\geq 1}$ such that $(Y^{n_{k}},Z^{n_{k}},U^{n_{k}})$, regarded as a triplet of measurable functions with values in $\R\times\R\times L^2(\nu)$, converges to $(Y,Z,U)$ for $\mathbb{P}\otimes\lambda$-a.a. $(\omega,s)$ .
Then, for an arbitrary $K>0$, we have  
\begin{align} \label{KO}
&\E\int_t^T \left|f(s,Y^{n_{k}}_s,Z^{n_{k}}_s,U^{n_{k}}_s)-f(s,Y_s,Z_s,U_s)\right|ds \notag \\
&\le \E \bigg \{\int_t^T \left|f(s,Y^{n_{k}}_s,Z^{n_{k}}_s,U^{n_{k}}_s)-f(s,Y_s,Z_s,U_s)\right|\\\nonumber
&\quad\times\left(\chi_{\{|Y^{n_{k}}_s|\leq K, |Z^{n_{k}}_s|+\|U^{n_{k}}_s\|\leq K\}}+\chi_{\{|Y^{n_{k}}_s|>K\}}+\chi_{\{ |Z^{n_{k}}_s|+\|U^{n_{k}}_s\|>K\}}\right)ds \bigg \}.\nonumber
\end{align}
By dominated convergence and the continuity of $f,$  
\begin{align*}
\E\!\int_t^T \!\!\left|f(s,Y^{n_{k}}_s,Z^{n_{k}}_s,U^{n_{k}}_s)-f(s,Y_s,Z_s,U_s)\right|\!\chi\!_{\{|Y^{n_{k}}_s|\leq K, |Z^{n_{k}}_s|+\|U^{n_{k}}_s\|\leq K\}}ds \to 0,
\end{align*}
since by \ref{2} we can bound the integrand by $$2F(s)+K_1(s)(K+|Y_s|)+K_2(s)(2K+|Z_s|+\|U_s\|),$$ 
which is integrable. We let 
$$\chi_K(n_{k},s) := \chi_{\{|Y^{n_{k}}_s|> K\}}+\chi_{\{|Z^{n_{k}}_s|+\|U^{n_{k}}_s\| > K\}}.$$ 
Then, the remaining terms of \eqref{KO} are bounded by
\begin{align*}
& \E\int_0^T\left(2F(s)+K_1(s)|Y_s|+K_2(s)(|Z_s|+\|U_s\|)\right) \chi_K(n_{k},s) ds  \\
&+\E\int_0^TK_1(s)|Y^{n_{k}}_s| \chi_K(n_{k},s)  ds \\
&+\E\int_0^T K_2(s)(|Z^{n_{k}}_s|+\|U^{n_{k}}_s\|)\chi_K(n_{k},s)ds \\
=: &\, \delta^{(1)}_{n_k} + \delta^{(2)}_{n_k}+ \delta^{(3)}_{n_k}.
\end{align*}
If we choose a $K$ large enough, then $\delta^{(1)}_{n_k}$  can be made arbitrarily small since the families $\left(|Y_s^n|,n\geq 0\right)$ and $\left(|Z^{n}_s|+\|U^{n}_s\|,n\geq 0\right)$ are uniformly integrable with respect to $\mathbb{P}\otimes \lambda$. 
The same holds for
 \equa
(\delta^{(2)}_{n_k})^2 &\le& \E\left |\int_0^TK_1(s)\chi_K(n_{k},s)ds\right |^2 \,\sup_l \|Y^{n_l}\|_{\mathcal{S}^2}^2\\
&\le&\left \|\int_0^TK_1(s)ds\right \|_\infty\E\left[ \int_0^TK_1(s)\chi_K(n_{k},s)ds\right] \,\sup_l \|Y^{n_l}\|_{\mathcal{S}^2}^2,
\tion 
and
\equa
(\delta^{(3)}_{n_k})^2 \le 2\E\left[\int_0^TK_2(s)^2\chi_K(n_{k},s)ds\right] \,\sup_l \E\int_0^T(|Z^{n_{l}}_s|^2+\|U^{n_{l}}_s\|^2)ds.
\tion
Hence, for $\delta_n$ defined in \eqref{differences}, we have that $\lim_{k\to \infty} \delta_{n_k}=0,$ which implies 
$$\lim_{k\to\infty}\E\left|\int_t^T f^{(n_{k})}(s,Y^{n_{k}}_s,Z^{n_{k}}_s,U^{n_{k}}_s)ds-\int_t^T f(s,Y_s,Z_s,U_s)ds\right|=0.$$
We infer that for a sub-subsequence $(n_{k_{l}},l\geq 0)$ we get the a.s. convergence
$$\int_t^T f^{(n_{k_{l}})}(s,Y^{n_{k_{l}}}_s,Z^{n_{k_{l}}}_s,U^{n_{k_{l}}}_s)ds\to\int_t^T f(s,Y_s,Z_s,U_s)ds.$$
Thus, for the original sequence, a.s.
$$\int_t^T f^{(n_k)}(s,Y^{n_k}_s,Z^{n_k}_s,U^{n_k}_s)ds\to V_t=\int_t^T f(s,Y_s,Z_s,U_s)ds,$$
 and therefore the triplet $(Y,Z,U)$ satisfies the BSDE $(\xi,f)$.   
\qed\bigskip

\subsection{Proof of Theorem \ref{finlev}} 
\label{5-2}
We start with a preparatory lemma:
\begin{lemma}\label{optcont}
{\ch If $f$ satisfies \ref{1}--\ref{4}, then for all $n\geq 0$, $f_n$ constructed in Definition \ref{optional-f} also satisfies \ref{1}--\ref{4} (with different coefficients). }
\end{lemma}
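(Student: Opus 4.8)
The plan is to transfer each of the four conditions through the conditional expectation $\E_n$, exploiting that by \eqref{optionalcond} one has $f_n(t,y,z,u)=\E_n f(t,y,z,u)$ $\mathbb{P}$-a.s.\ for every fixed $(t,y,z,u)$, and that $f_n$ is already $(\mathcal{F}^n_t)$-optional, hence progressively measurable. Thus \ref{1} holds for $f_n$ by construction, and \ref{4} is immediate since $\rho$ is not altered. For \ref{2} and the monotonicity bound in \ref{3} I would define the new coefficients $F_n,K_1^n,K_2^n,\beta_n$ as the $(\mathcal{F}^n_t)$-optional projections of $F,K_1,K_2,\beta$ (constructed as in Definition \ref{optional-f}, so that they are nonnegative and progressively measurable versions of $\E_n F,\E_n K_1,\E_n K_2,\E_n\beta$), while keeping $\alpha$ and $\rho$. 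Applying $\E_n$ to the growth and monotonicity inequalities — which is legitimate because $\E_n$ is monotone, because the integrability needed for conditioning comes from \ref{2}, and because the factors $(y-y')$ and $\alpha(s)\rho(|y-y'|^2)$ are deterministic for fixed arguments — yields, for each fixed tuple in a countable dense subset of $\R\times\R\times L^2(\nu)$ and $\mathbb{P}\otimes\lambda$-a.e., the corresponding inequalities for $f_n$ with the coefficients $F_n,K_1^n,K_2^n,\beta_n$.

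The admissibility of these new coefficients then follows from conditional Jensen: since $(K_2^n)^2\le\E_n(K_2^2)$ and $\E_n$ commutes with $\int_0^T\!\cdot\,ds$ on nonnegative integrands, one gets $\int_0^T(K_1^n+(K_2^n)^2)\,ds\le\E_n\!\int_0^T(K_1+K_2^2)\,ds\le C_K$ a.s., so \eqref{C-K} holds with a constant no larger than $C_K$; likewise $\int_0^T\beta_n^2\,ds\le\E_n\!\int_0^T\beta^2\,ds\le c$ a.s. For $F_n$ one has $I_{F_n}=\E_n I_F$ (with $I_F$ from \eqref{I-FandK-F}) and hence $\E I_{F_n}^2\le\E I_F^2<\infty$, so $F_n\in L^2(\Omega;L^1([0,T]))$ by \eqref{LtwoLone}. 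To upgrade all these pointwise-a.s.\ inequalities to statements valid for \emph{all} arguments simultaneously, I would invoke the a.s.\ continuity of $f_n$ established next and pass from the countable dense set to the whole space.

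The main obstacle is exactly the continuity requirement in \ref{3}: producing a single $\mathbb{P}$-null set off which $(y,z,u)\mapsto f_n(s,y,z,u)$ is continuous, where the infinite-dimensionality of $L^2(\nu)$ rules out a naive compactness argument. I would split the two directions. For the $(z,u)$-direction I would use \ref{4}: as observed in the second item of Remark \ref{remark}, comparing the arguments $(y_0,z,u)$ and $(y_0-h,z',u')$ in the monotonicity bound, dividing by $h>0$ and letting $h\downarrow0$ (so that $\rho(h^2)/h\to0$), shows that $f(s,\cdot)$ is $\beta(s)$-Lipschitz in $(z,u)$ uniformly in $y$; applying $\E_n$ transfers this to $f_n$ with constant $\beta_n(s)$. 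For the $y$-direction I would fix $(z,u)$ in a countable dense set and use the random modulus $\omega(s,\delta,R):=\sup\{|f(s,y,z,u)-f(s,y',z,u)|:|y|,|y'|\le R,\ |y-y'|\le\delta\}$, which tends to $0$ a.s.\ as $\delta\downarrow0$ (uniform continuity of $f(s,\cdot)$ on the compact interval $[-R,R]$) and is dominated via \ref{2}; conditional dominated convergence then gives $\E_n\omega(s,\delta,R)\to0$ a.s., so $y\mapsto f_n(s,y,z,u)$ is a.s.\ uniformly continuous on each $[-R,R]\cap\mathbb{Q}$ and extends continuously. Combining the uniform $(z,u)$-Lipschitz bound with continuity in $y$ yields joint a.s.\ continuity of $f_n$, which both completes \ref{3} and justifies the extension of the dense-set inequalities of the previous paragraphs to all arguments.
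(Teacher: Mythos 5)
Your handling of \ref{1}, \ref{2}, \ref{4} and of the two inequalities in \ref{3} (conditioning the bounds, taking $(\mathcal{F}^n_t)$-optional projections of $F,K_1,K_2,\beta$, and using conditional Jensen and the conditional Fubini--Tonelli theorem to keep $C_K$, $c$ and $\E I_F^2$ under control) coincides with what the paper does, and your derivation of the $(z,u)$-Lipschitz bound from \ref{3} and \ref{4} is correct and is even anticipated in Remark \ref{remark}. The gap lies in the continuity argument, which is the entire substance of the lemma. What your argument actually delivers is: off a single null set, the restriction of $f_n(\omega,t,\cdot)$ to a countable dense subset $D\subset\R^2\times L^2(\nu)$ is uniformly continuous on bounded sets and therefore admits a continuous extension $\tilde f_n(\omega,t,\cdot)$. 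To conclude that $f_n$ itself is continuous you must show $f_n(\omega,t,\varphi)=\tilde f_n(\omega,t,\varphi)$ for \emph{all} $\varphi$ outside one null set. Conditional dominated convergence along $D\ni\varphi_m\to\varphi$ gives this only for each \emph{fixed} $\varphi$, up to a null set $M(\varphi)$ depending on $\varphi$; since $\varphi$ ranges over an uncountable set, the union of these exceptional sets need not be null. So the phrase ``extends continuously'' proves continuity of a modification of $f_n$, not of the optional projection $f_n$ from Definition \ref{optional-f} itself, and your plan to ``upgrade the pointwise-a.s.\ inequalities by invoking the continuity established next'' is circular: that continuity is itself a statement about uncountably many arguments simultaneously. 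The distinction matters downstream, because in the proof of Theorem \ref{finlev} the function $f_n$ is evaluated at the random arguments $(Y^n_s,Z^n_s,U^n_s)$, which a.s.\ avoid any fixed countable set.

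This version-versus-indistinguishability issue is exactly what the paper's proof is built around: it encodes the approximating increments as points $a$ of the Polish space $c_0$ of null sequences, shows that $P=\{(\omega,a,\varphi): H(\omega,a,\varphi)\neq f_n(\omega,t,\varphi)\}$ is analytic, and applies the Jankov--von Neumann uniformization theorem (Theorem \ref{JvN}) to obtain a measurable selection $(\hat A,\hat\Phi)$ of $P$; since the a.s.\ identity $H(\omega,A(\omega),\Phi(\omega))=f_n(\omega,t,\Phi(\omega))$ holds for every measurable $(A,\Phi)$, the projection of $P$ onto $\Omega$ must be null, which yields continuity at all points of $B_N'$ simultaneously. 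Your proof needs this selection step (or an equivalent section-theorem argument, or an explicit replacement of $f_n$ by the continuous modification together with a verification that all of its required properties, including the identity $f_n(s,\Theta_s)=\E_n f(s,\Theta_s)$ at random arguments, survive the replacement); as written it has a genuine gap.
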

\begin{proof}
By definition, $(\omega,t)\mapsto f_n(t,y,z,u)$ is progressively measurable for all $(y,z,u)$, thus \ref{1} is satisfied. 
The inequalities in \ref{2} and \ref{3} are a.s. satisfied, with coefficients $\E_n F, \E_n K_1, \E_n K_2, \E_n\beta$. To ensure that these coefficients have a  $\left(\mathcal{F}_t^n\right)_{t\in{[0,T]}}$-progressively measurable  version, one  applies the procedure from Definition  \ref{optional-f}  to the inequalities in \ref{2} and \ref{3}  and notes that an equation analogous  to \eqref{optionalcond} holds true. 

\bigskip

 It remains to show a.s.~continuity of $f_n$ in the $(y,z,u)$-variables required  in \ref{3}  for a.e.~$t.$  In \cite[Proposition 7.3]{Ylinen}, this  was shown by the fact that the 
approximation of the generators appearing there can be done using  spaces of continuous functions. However, since our situation involves $L^2(\nu)$, a non-locally 
compact space, we can not easily adapt the proof from \cite{Ylinen} and therefore we will use different means.

Let $\mathrm{D}[0,T]$ be the space of c\`adl\`ag functions endowed by the Skorohod metric (which makes this space a Polish space). The Borel $\sigma$-algebra $\mathcal{B}(\mathrm{D}[0,T])$ is generated by the coordinate projections $p_t\colon\mathrm{D}[0,T]\to\R, \mathrm{x}\mapsto\mathrm{x}(s)$ (see Theorem 12.5 of \cite{Billing}, for instance). On this $\sigma$-algebra, let $\mathbb{P}_X$ be the image measure induced by the L\'evy process $X$: $\Omega\to\mathrm{D}[0,T], \omega\mapsto X(\omega)$. We denote by $\mathcal{G}$ the completion with respect to $\mathbb{P}_X$.  For $t\in{[0,T]},$ the 
notation $$\mathrm{x}^t(s):=\mathrm{x}(t\wedge s),\text{ for all } s\in{[0,T]}$$ induces the natural identification $$\mathrm{D}{[0,t]}=\left\{\mathrm x\in \mathrm{D}{[0,T]} : \mathrm{x}^t=\mathrm{x} \right\}.$$ By this identification, 
we define a filtration on this space through
\equa 
\mathcal{G}_t=\sigma\left(\mathcal{B}\left(\mathrm{D}{[0,t]}\right)\cup \mathcal{N}_X{[0,T]}\right), \quad 0\leq t\leq T,
\tion where $\mathcal{N}_X{[0,T]}$ denotes the 
null sets of $\mathcal{B}\left(\mathrm{D}{[0,T]}\right)$ with respect to the  image measure $\mathbb{P}_X$  of the L\'evy process $X$. 
The same procedure applied to the L\'evy process $X^n$ yields a filtration $(\mathcal{G}_t^n)_{t\in{[0,T]}}$ defined in the same way.

According to \cite[Theorem 3.4]{SteinickeI}, which is a generalization of Doob's factorization lemma to random variables depending on  parameters,  there is a 
$\mathcal{G}_t\otimes\mathcal{B}([0,t]\times\R^2\times L^2(\nu))$-measurable functional 
$$g_{f}\colon \mathrm{D}[0,t]\times [0,t]\times\R^2\times L^2(\nu)\to\R$$ 
and a $\mathcal{G}_t^n\otimes\mathcal{B}([0, t]\times\R^2\times L^2(\nu))$-measurable functional 
$$g_{f_n}\colon \mathrm{D}[0,t]\times [0, t]\times\R^2\times L^2(\nu)\to\R$$ 
such that $\mathbb{P}$-a.s., 
\equal \label{factor}
g_f(X(\omega),\cdot)=f(\omega,\cdot) \quad \text{and} \quad g_{f_n}(X^n(\omega),\cdot)=f_n(\omega,\cdot).
\tionl Note also, that if $\mathbb{P}_X(M)=0$ for $M\in \mathcal{G}$, then also $\mathbb{P}(X^{-1}(M))=0$. Thus, without loss of generality, we may assume that $(\Omega,\mathcal{F},\mathbb{P})=(\mathrm{D}[0,T],\mathcal{G},\mathbb{P}_X)$ and $(\Omega,\mathcal{F}_t^n,\mathbb{P})=(\mathrm{D}([0,t]),\mathcal{G}_t^n,\mathbb{P}_X)$, which are standard Borel spaces. For more details on $\mathrm{D}{[0,T]}$, see \cite{Billing} and \cite[Section 4]{delzeith}.\bigskip

Now, fix $N\in\mathbb{N}$ and let $c_0:=\{(a_n)_n\in (\R^2\times L^2(\nu))^\mathbb{N}: a_n\to 0\}.$ For $a\in c_0$, let 
$\|a\|_{c_0}=\sup_{n\in\mathbb{N}} (|a_n(1)|+|a_n(2)|+\|a_n(3)\|)$, where $a(k),k=1,2,3$ are the components of $a$ in $\R$, $\R$ and $L^2(\nu)$.
The space $c_0$ is a Polish space.
Let $B_N$ be the ball with radius $N\in\mathbb{N}$ in $c_0$ and let $B'_N$ be the ball of  radius $N$ in $\R^2\times L^2(\nu)$. The balls $B_N, B'_N$ are again Polish spaces.
 
We consider a Borel set $M_T$ of $t\in [0,T]$ for which $f$ is continuous in $(y,z,u)$  and for which it holds that
$f$ has an integrable bound:
\equal \label{f-int}
  \E |f(t,y,z,u)| \le  \E F(t) +  \E K_1(t)|y|+ \E K_2(t)(|z|+\|u\|) < \infty.  
\tionl
From   \ref{3}  and \eqref{f-integrable} it follows that one can choose $M_T$ such that $\lambda(M_T)=T.$   \smallskip \\
For a fixed  $t \in M_T$ we define the function 
$$H_m:\Omega\times B_N\times B'_N\to\R, \, (\omega,a,\varphi)\mapsto f_n(\omega,t,a_m+\varphi),$$
where $\varphi$ denotes a triplet $(y,z,u)\in\R^2\times L^2(\nu)$.
This function is measurable since 
$f_n(\cdot,t,\cdot)$ is measurable, $\pi_m:B_N\times B'_N\to\R^2\times L^2(\nu), (a,\varphi)\mapsto(a_m+\varphi)$ is continuous and $\mathrm{id}\times\pi_m:\Omega\times B_N\times B'_N\to\Omega\times\R^2\times L^2(\nu)$ is measurable.

Next, we consider the map
$$H:\Omega\times B_N\times B'_N\to\R, (\omega,a,\varphi)\mapsto \begin{cases} \lim_{m\to\infty}f_n(\omega,t,a_m+\varphi), &\text{ if it exists}\\
0, & \text{ else}.\end{cases}$$
The set, where the limit exists is measurable, since it can be written as 
$$\bigcap_{k\geq 1}\bigcup_{M\geq 0}\bigcap_{m_1,m_2\geq M}\left\{|H_{m_1}-H_{m_2}|<\frac{1}{k}\right\}.$$
Therefore, $H$ can be written as the pointwise limit of measurable functions and is thus measurable.

We now know that, for a fixed pair $(a,\varphi)\in B_N\times B'_N$, $$f_n(t,a_m+\varphi)=\E_n f(t,a_m+\varphi), \quad\mathbb{P}\text{-a.s.}$$
Thus, by  \eqref{f-int} 
$$|f_n(t,a_m+\varphi)|\leq \E_n F(t)+2N\E_n K_1(t)+4N\E_n K_2(t) < \infty.$$ 
By the continuity of $f$ and the dominated convergence theorem for conditional expectations, we infer that up to a null set $M(a,\varphi)\in\mathcal{F}^n_t$, we have the relation
\begin{align*}
&\lim_{m\to\infty}f_n(t,a_m+\varphi)=\lim_{m\to\infty}\E_n f(t,a_m+\varphi)=\E_n \lim_{m\to\infty} f(t,a_m+\varphi)\\
&=\E_n f(t,\varphi)=f_n(t,\varphi).
\end{align*}
In other words, on the complement of $M(a,\varphi)$, we have $H(\omega,a,\varphi)=f_n(\omega, t,\varphi)$. This means that $H$ and $f_n(\cdot,t,\cdot)$ are ''versions'' of each other. What we need is ''indistinguishability'' of the processes.

For this purpose, let $(A,\Phi):\Omega\to B_N\times B'_N$ be an arbitrary $\mathcal{F}^n_t$-measurable function. 
Like above, by the definition of the optional projection, \ref{2}, and the continuity of $f$, we get the equation
\begin{align*}
&\lim_{m\to\infty}f_n(t,A_m+\Phi)=f_n(t,\Phi),
\end{align*}
which is also satisfied $\mathbb{P}$-a.s. This equality means, that $$H(\omega,A(\omega),\Phi(\omega))= f_n( \omega, t,\Phi(\omega)),\quad a.s.$$

All $\mathcal{F}^n_t$ were complete $\sigma$-algebras (in fact they contain all null sets of $\mathcal{F}$) and the spaces $B_N,B'_N$ were Polish. Thus we may use a generalized 
version of the section theorem, the Jankov--von Neumann theorem (Theorem \ref{JvN}),  by choosing a uniformizing function $(\hat A,\hat \Phi)$ for the set 
$$P=\{(\omega,a,\varphi): H(\omega,a,\varphi)\neq f_n(\omega,  t, \varphi)\}.$$
Note that $P$ is a Borel set and therefore especially analytic, since $H$ and $f_n(\cdot,t,\cdot)$ (interpreted as a  constant  map w.r.t.~$a$) are measurable functions in 
$(\omega, a,\varphi ).$ 
Since for this choice of $(\hat A,\hat \Phi)$ it holds, as seen above, that $$H(\omega,\hat A(\omega),\hat \Phi(\omega))= f_n(\omega,t,\hat \Phi(\omega)), \quad \text{  a.s. } $$ 
it follows that the projection of $P$ to $\Omega$ is a null set. Therefore, $H$ and $f_n$ are indistinguishable. Hence, we find a null set $M_N\in\mathcal{F}^n_t$, such that for $\omega$ outside this set and for all $(a,\varphi)\in B_N\times B'_N$: $$\lim_{m\to\infty}f_n(\omega,t,a_m+\varphi)=f_n(\omega,t,\varphi).$$
But this means continuity in all points of $B'_N$ a.s. 
It remains to unite the sets $M_N$ for all $N\in\mathbb{N}$, to obtain a set such that on its complement the function is continuous in all points of $\R^2\times L^2(\nu)$.
\end{proof}
\renewcommand*{\proofname}{Proof of Theorem \ref{finlev}}
\begin{proof}
\texttt{Step 1:}\\
If $f$ satisfies \ref{1}--\ref{4}, by Lemma \ref{optcont} all $f_n$ do so as well. In this case, for all $n\geq 0$, the equations $(\E_n\xi,f_n)$ have unique solutions  
 by Theorem \ref{existence}. In general, the coefficients in \ref{2} 
and $\beta$ differ dependent on $n$ since $F, K_1, K_2, \beta$ will be replaced by the coefficients $\E_n F, \E_n K_1, \E_n K_2,  \E_n\beta$.\\
Let us compare the solutions $(Y^n,Z^n,U^n)$ and $(Y,Z,U).$
We start comparing $(Y^n,Z^n,U^n)$ and $(\E_n Y, \E_n Z, \E_n U)$. Here, for instance, the process $((\E_n Y)_t)_{t\in{[0,T]}}$  is defined as an optional projection with respect to the filtration 
$\left(\mathcal{F}^n_t\right)_{t\in{[0,T]}}$, similar to Definition \ref{optional-f}. The so defined processes are versions of the processes $(\E_n Y_t,\E_n Z_t, \E_n U_t)_{t\in{[0,T]}}$.     

Using the BSDE for $(Y,Z,U)$, we get $\mathbb{P}$-a.s.
\begin{align}\label{BSDE-conditional}
\E_n Y_t=& \,\E_n \xi+\int_t^T\E_n f(s,Y_s,Z_s,U_s)ds - \int_t^T\E_n Z_s dW_s\nonumber\\
&-\int_{{]t,T]}\times\{1/n \leq |x|\}}\E_n U_s(x)\tilde{N}(ds,dx),
\end{align}
since  $$\E_n  \int_{{]t,T]}\times\{1/n > |x|\}} U_s(x)\tilde{N}(ds,dx)=0. $$  
Now, to estimate $\|Y^n-\E_n Y\|_{L^2(W)}+\|Z^n-\E_n Z\|_{L^2(W)}+\|U^n-\E_n U\|_{L^2(\tilde N)}$, we apply It\^o's formula to the difference of the BSDE $(\E_n \xi,f_n)$ and \eqref{BSDE-conditional}. Similar to the proof of Proposition \ref{continuitythm}, we get, denoting differences by $\Delta^n$ and $\eta:=4\beta(s)^2,$
\begin{align*}
& \E \bigg \{e^{\int_0^t\eta(s)ds}|\Delta^n Y_t|^2+\int_t^T e^{\int_0^s\eta(\tau)d\tau}\left(\eta(s)|\Delta^n Y_s|^2 +|\Delta^n Z_s|^2+ \|\Delta^n U_s\|^2\right)ds\bigg \} \notag\\
&=\E\int_t^T 2e^{\int_0^s\eta(\tau)d\tau}(\Delta^n Y_s) \,(f_n(s,Y^n_s,Z^n_s,U^n_s)-\E_n f(s,Y_s,Z_s,U_s))ds.
\end{align*}
By the measurability of $(Y^n,Z^n,U^n)$, the equality $$f_n(s,Y^n_s,Z^n_s,U^n_s)=\E_n f(s,Y^n_s,Z^n_s,U^n_s)$$ holds $\mathbb{P}$-a.s. for all $s$. We now estimate
\begin{align*}
&\hspace{-3em} \E\Big[(\Delta^n Y_s) \,(f_n(s,Y^n_s,Z^n_s,U^n_s)-\E_n f(s,Y_s,Z_s,U_s))\Big]\\
= \,&\E\Big[(\Delta^n Y_s) \,(\E_n f(s,Y^n_s,Z^n_s,U^n_s)-\E_n f(s,Y_s,Z_s,U_s))\Big]\\
= \,&\E\Big[(\Delta^n Y_s) \,(f(s,Y^n_s,Z^n_s,U^n_s)-f(s,Y_s,Z_s,U_s))\Big]\\
=\,&\E\Big[(\Delta^n Y_s) \,(f(s,Y^n_s,Z^n_s,U^n_s)-f(s,\E_n Y_s,\E_n Z_s,\E_n U_s))\Big]\\
&+\E\Big[(\Delta^n Y_s) \,(f(s,\E_n Y_s,\E_n Z_s,\E_n U_s)-f(s,Y_s,Z_s,U_s))\Big]\\
\leq\,& \E\Big[\alpha(s)\rho(|\Delta^n Y_s|^2)+\beta(s)|\Delta^n Y_s|(|\Delta^n Z_s|+\|\Delta^n U_s\|)\Big]\\
&+\E\Big[|\Delta Y^n_s||(f(s,\E_n Y_s,\E_n Z_s,\E_n U_s)-f(s,Y_s,Z_s,U_s)|\Big].
\end{align*}
Now, we can conduct exactly the same steps as in the standard procedure used in the proof of Proposition \ref{continuitythm}. This means that  $\|\Delta^n Y\|_{L^2(W)}+\|\Delta^n Z\|_{L^2(W)}$ $+\|\Delta^n U\|_{L^2(\tilde N)}$ converges to zero if
\begin{align}\label{want}
\E\int_0^T |\Delta Y^n_s||(f(s,\E_n Y_s,\E_n Z_s,\E_n U_s)-f(s,Y_s,Z_s,U_s)|ds
\end{align}
does, which we will show in the following steps.

\texttt{Step 2:}\\
In this step, we show that the solution processes $(Y^n,Z^n, U^n)$ satisfy the estimate
\begin{equation}\label{supns}
\sup_{n\geq 0} \left(\|Y^n\|_{\mathcal{S}^2} +\left\|Z^n\right\|_{L^2(W) }^2 + \left\|U^n\right\|_{L^2(\tilde N) }^2\right)<\infty.
\end{equation}
This, as in the proof of Theorem \ref{existence}, leads to the uniform integrability of the processes 
$\left({|Y^n|},n\geq 0\right)\text{ and }\left(|Z^n|+\|U^n\|,n\geq 0\right)$ with respect to $\mathbb{P}\otimes\lambda$.

By Proposition \ref{supprop}, we get that 
\begin{align*}
&\|Y^n\|^2_{\mathcal{S}^2} + \left\|Z^n\right\|_{L^2(W) }^2 + \left\|U^n\right\|_{L^2(\tilde N) }^2  \leq e^{C_1(1+ C_{K,n})^2} \left(\E |\E_n\xi|^2+\E (I_{\E_nF})^2\right),
\end{align*}
where $C_{K,n}= \left\|\int_0^T\left(\E_n K_1(s)+(\E_n K_2(s))^2\right)ds\right\|_\infty$.
By the monotonicity of $\E_n$ and Jensen's inequality, we get that
\begin{align*}
 \int_0^T\!\!\left(\E_n K_1(s)+(\E_n K_2(s))^2\right)\!ds\leq\E_n\int_0^T\!\!\!(K_1(s)+K_2(s)^2)ds\leq C_K,\,\,\mathbb{P}\text{-a.s.}
\end{align*}
Doob's martingale inequality  applied to $n\mapsto\E_n\xi$ and 
$n\mapsto I_{\E_nF}=\E_n\int_0^TF(s)ds$ yields that
\begin{align*}
&\|Y^n\|^2_{\mathcal{S}^2} + \left\|Z^n\right\|_{L^2(W) }^2 + \left\|U^n\right\|_{L^2(\tilde N) }^2  \leq e^{C_1(1+ C_{K})^2} \left(\E |\xi|^2+\E (I_{F})^2\right).
\end{align*}

Furthermore, 
\begin{equation}\label{supns-conditional}
\sup_{n\geq 0} \left(\|\E_n Y\|_{\mathcal{S}^2} +\left\|\E_n Z\right\|_{L^2(W) }^2 + \left\|\E_n U\right\|_{L^2(\tilde N) }^2\right)<\infty
\end{equation}
follows from martingale convergence and Jensen's inequality and implies uniform integrability of the processes 
$\left({|\E_n Y|},n\geq 0\right)\text{ and }\left(|\E_n Z|+\|\E_n U\|,n\geq 0\right)$ with respect to $\mathbb{P}\otimes\lambda$.
\bigskip

\texttt{Step 3:}\\
In this step, we show the convergence \eqref{want}. 
From martingale convergence, we get that for all $t\in {[0,T]}$, $\E_n Y_t\to Y_t$, $\E_n Z_t\to Z_t$ and $\E_n U_t\to U_t$, $\mathbb{P}$-a.s.
This implies that $f(s,\E_n Y_s,\E_n Z_s,\E_n U_s)\to f(s,Y_s,Z_s,U_s)$ in $\mathbb{P}\otimes\lambda$. Therefore, 
\begin{align*}
\lim_{n\to\infty}&\E\int_0^T|Y^n_s-\E_n Y_s||f(s,\E_n Y_s,\E_n Z_s,\E_n U_s)- f(s,Y_s,Z_s,U_s)|\\
&\quad\quad\times\chi_{\{|Y^{n}_s|+|\E_n Y_s|\leq K\}}ds=0
\end{align*}
since the integrals form a uniformly integrable sequence with respect to $\mathbb{P}\otimes\lambda$. Indeed, we have,
using \ref{2} for $f$ and the first equation of \eqref{algebra}, the estimate
\begin{align*}
&|Y^n_s-  \E_n Y_s||f(s,\E_n Y_s,\E_n Z_s,\E_n U_s)- f(s,Y_s,Z_s,U_s)|\chi_{\{|Y^n_s|+|\E_n Y_s|\leq K\}} \\
&\leq  4K (F(s)+K_1(s))\nonumber\\
&\quad
+2K(K_2(s)^2)+|Z_s|^2+\|U_s\|^2+|\E_n Z_s|^2+\|\E_n U_s\|^2),\nonumber
\end{align*}
where  $ n\mapsto \E_n Z_s,  n\mapsto \E_n  U_s$ converge since they are 
closable martingales. 
 
Next, we will show that 
\begin{align}\label{want3}
\delta_n(K):= \E \bigg \{\int_0^T&|Y^n_s-\E_n Y_s||f(s,\E_n Y_s,\E_n Z_s,\E_n U_s)- f(s,Y_s,Z_s,U_s)|\nonumber\\
&\times\chi_{\{|Y^n_s|+|\E_n Y_s|> K\}}ds \bigg \}
\end{align}
can be made arbitrarily small by the choice of $K>0,$ uniformly in $n$. Again by \ref{2} and using the notation $\chi^n_K(s):= \chi_{\{|Y^n_s|+|\E_n Y_s|> K\}},$
we estimate like in \eqref{A2-used}  
\begin{align*}
&|Y^n_s-\E_n Y_s||f(s,\E_n Y_s,\E_n Z_s,\E_n U_s)- f(s,Y_s,Z_s,U_s)|\chi_{\{|Y^n_s|+|\E_n Y_s|> K\}} \\
&\le |Y^n_s-  \E_n Y_s| \Big(2F(s)+K_1(s)(|Y_s|+|\E_n Y_s|)\\
&\quad\quad\quad\quad\quad+K_2(s)(|Z_s|+|\E_n Z_s|+\|U_s\|+\|\E_n U_s\|)\Big)\chi^n_K(s)
\end{align*}
and get
\begin{align} \label{A2-used-again} 
\delta_n(K) &\le   2\E \bigg \{\int_0^T\chi^n_K(s)\,  F(s)ds  \,\,  (\sup_{r\in{[0,T]}}| Y^n_r|+\sup_{r\in{[0,T]}}|\E_n Y_r|) \bigg \}\nonumber \\
&\quad +   \E \bigg \{\int_0^T \chi^n_K(s)\,K_2(s)\nonumber\\
&\quad\quad\quad\quad\times(| Y^n_s|+|\E_n Y_s|)(|Z_s|+|\E_n Z_s|+\|U_s\|+\|\E_n U_s\|)ds \bigg \}  \nonumber \\
&\quad +\E\int_0^T   |Y^n_s-\E_n Y_s|\, (|Y_s|+|\E_n Y_s|)   \chi^n_K(s) \,K_1(s)ds \nonumber\\
&=: \delta^{(1)}_{n,K} + \delta^{(2)}_{n,K}+ \delta^{(3)}_{n,K}.
\end{align}
For $\delta^{(1)}_{n,K},$ we estimate
$$\delta^{(1)}_{n,K}\leq  2  \left\|\int_0^T\chi^n_K(s) F(s)ds\right\|_2\sup_{l\geq 0}(\|Y^l\|_{\mathcal{S}^2}+\|\E_l Y_s\|_{\mathcal{S}^2})$$
which tends to zero as $K\to\infty$, since  we have  $\chi^n_{K}\to 0$   in $\mathbb{P}\otimes\lambda$, uniformly in $n,$   as $K\to \infty.$ The latter is implied by  the uniform integrability of the families $(|Y^n|)_{n\geq 0}$ and $(|\E_n Y|)_{n\geq 0}$ with respect to $\mathbb{P}\otimes\lambda.$  
We continue with the next summands, 
\begin{align}\label{delta2} 
&\delta^{(2)}_{n,K}\le 8\bigg ( \E\int_0^T(| Y^n_s|^2 +|\E_n Y_s|^2)\chi^n_K(s)\,K_2(s)^2 
 ds\bigg)^\frac{1}{2} \notag  \\
& \hspace{8em}\times\left ( \left\|Z\right\|_{L^2(W)}+ \left\|U\right\|_{L^2(\tilde N) }\right) 
\end{align}
and 
\begin{align}\label{delta3}
&\delta^{(3)}_{n,K}\le  \E\int_0^T (|Y_s|^2+|Y^n_s|^2 + 2|\E_n Y_s|^2)\,  \chi^n_K(s) \,K_1(s)ds,
\end{align}
where, for  $\E\int_0^T\chi^n_K(s) ( |Y_s|^2 +|Y^n_s|^2)K_1(s)ds$ and $\E\int_0^T \chi^n_K(s)  |Y^n_s|^2  K_2(s)^2ds,$ we will apply the estimate \eqref{eta-half-trick} from the proof of Lemma \ref{lemma-eta-trick}. For example (the other terms can be treated similarly), we get   
\equal \label{eta-trick-term}
&&\E\int_0^T   \chi^n_K(s)    | Y^n_s|^2\,K_2(s)^2 ds \notag \\
&\leq & \,   \E\int_0^T\chi^n_K(s)\,K_2(s)^2ds\cdot  e^{\int_0^T \eta_n(s)ds}|\E_n\xi|^2\nonumber\\
&& + 2\E\int_0^T\int_0^s\chi^n_K(s)\, K_2(\tau)^2d\tau\, e^{\int_0^s \eta_n(\tau)d\tau}\E_n F(s)|Y^n_s|ds \nonumber\\
& \leq & e^{2 C_K}\E\int_0^T\chi^n_K(s)\,K_2(s)^2ds|\E_n \xi|^2\nonumber\\
&& +2e^{2 C_K} \left\|\int_0^T \chi^n_K(s)\,K_2(s)^2ds\cdot I_{\E_n F}\right\|_2\|Y^n\|_{\mathcal{S}^2}
\tionl
with $\int_0^T \eta_n(s)ds =\int_0^T \E_n K_1(s)+(\E_n K_2(s))^2ds \le C_K$  a.s. 
Now, 
one gets that
$$\int_0^T \chi^n_K(s) K_2(s)^2ds \stackrel{\PP}{\to} 0, \quad K\to \infty. $$
Furthermore, using $\sup_{n\geq 0}\E_n\int_0^TF(s)ds<\infty$, $\mathbb{P}$-a.s. (which follows from martingale convergence),
$$\int_0^T \chi^n_K(s) K_2(s)^2ds\int_0^T\E_nF(s)ds \stackrel{\PP}{\to} 0, \quad K\to \infty,$$ independently of $n$.
Since, by Doob's maximal inequality,
\begin{align*}
&\E \left[\sup_{n\geq 0}\int_0^TK_2(s)^2ds\int_0^T\E_nF(s)ds\right]^2\\
&\leq \E \left[\sup_{n\geq 0}C_K \E_n\int_0^T F(s) ds\right]^2\leq  4C_K^2 \E I_F^2<\infty,
\end{align*}
dominated convergence is applicable to the last expression in \eqref{eta-trick-term}. The first summand containing $\xi$ can be treated in the same way.

The terms containing $|\E_n Y_s|$ in the inequalities \eqref{delta2} and \eqref{delta3}, e.g.,~the expression $\E\int_0^T \chi^n_K(s) |\E_n Y_s|^2K_1(s)ds,$ can be estimated by 
\begin{align*}
\E\int_0^T \chi^n_K(s) |\E_n Y_s|^2K_1(s)ds&\leq \E\int_0^T \chi^n_K(s) K_1(s)\left(\sup_{l\geq 0}|\E_l Y_s|\right)^2ds\\
&\leq\E \bigg \{\int_0^T\chi^n_K(s) K_1(s)ds\left(\sup_{t\in{[0,T]}}\sup_{l\geq 0}\E_l|Y_t|\right)^2  \bigg \}\\
&\leq 2C_K\|Y\|^{ 2}_{\mathcal{S}^2},
\end{align*}
where we used Doob's maximal inequality again. Since $\int_0^T\chi^n_K(s) K_1(s)ds\to 0$ in $\mathbb{P}$ as $K\to \infty$, all the terms in \eqref{delta2} and \eqref{delta3} become small, uniformly in $n$, if $K$ is large.
So the expressions $\delta^{(2)}_{n,K}$ and $\delta^{(3)}_{n,K}$ can be made arbitrarily small by the choice of $K$, which gives us the  desired convergence
\begin{equation*}
\E\int_0^T |Y^n_s-\E_n Y_s||f(s,\E_n Y_s,\E_n Z_s,\E_n U_s)- f(s,Y_s,Z_s,U_s)|ds\to 0.
\end{equation*}

\texttt{Step 5:}

Since, by the last step, $$\|Y^n-\E_n Y\|_{L^2(W)}+\|Z^n-\E_n Z\|_{L^2(W)}+\|U^n-\E_n U\|_{L^2(\tilde N)}\to 0,$$
and also, by martingale convergence,
$$\|Y-\E_n Y\|_{L^2(W)}+\|Z-\E_n Z\|_{L^2(W)}+\|U-\E_n U\|_{L^2(\tilde N)}\to 0,$$
we get

$$\|Y^n- Y\|_{L^2(W)}+\|Z^n-Z\|_{L^2(W)}+\|U^n-U\|_{L^2(\tilde N)}\to 0.$$
\end{proof}

\section*{Acknowledgement}
The authors  thank  Stefan Geiss and Juha Ylinen, University of Jyv\"askyl\"a, for fruitful discussions and valuable suggestions.\smallskip

{\ch Moereover, we are sincerly grateful to the anonymous reviewers for their helpful comments and questions. }\smallskip

Christel Geiss would like to thank the  Erwin Schr\"odinger Institute, Vienna, for hospitality and support, where  a part of this work was written.

\subsection*{Funding}

Large parts of this article were written when Alexander Steinicke was member of the Institute of Mathematics and Scientific Computing, University of Graz, Austria, and supported by the Austrian Science Fund (FWF): Project F5508-N26, which is
part of the Special Research Program ``Quasi-Monte Carlo Methods: Theory and Applications.''

\appendix
\section{Appendix}
{\bf The Bihari--LaSalle inequality.} 
For the  Bihari--LaSalle inequality   we refer to \cite[pp. 45-46]{maobook}. Here, we formulate a  backward version of it which  has been applied  in \cite{YinMao}.
The proof is analogous to that in \cite{maobook}.

\begin{prop} \label{bihari-prop} Let $c>0.$ Assume that $\rho: [0,\infty[ \to [0,\infty[$ is a continuous and non-decreasing function such that $\rho(x) >0$ for all 
$x>0.$ Let $K$ be a non-negative, integrable Borel function on $[0,T],$ and $y$ a non-negative, bounded Borel function on $[0,T],$ such that
\begin{align*}
y(t) &\le c + \int_t^T K(s) \rho(y(s)) ds.
\end{align*}   
Then, it holds that 
$$ y(t) \le G^{-1} \left (G(c) +   \int_t^T K(s)  ds \right ) $$

for all $t \in [0,T]$ such that $G(c) +   \int_t^T K(s)  ds \in {\rm dom}(G^{-1}).$
Here 
\begin{align*}
 G(x) := \int_1^x \frac{dr}{\rho(r)},
\end{align*}
and $G^{-1}$ is the inverse function of $G.$ \\
Especially, if $\rho(r)=r$ for $r\in [0,\infty[,$ it holds that
\equal \label{gronwall-backwards}
y(t)\le c e^{\int_t^T K(s)  ds}.
\tionl
\end{prop}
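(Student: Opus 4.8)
The plan is to reduce the integral inequality to an elementary monotone comparison for an absolutely continuous majorant, transported through the substitution $G$. First I would introduce the majorant
$$v(t) := c + \int_t^T K(s)\,\rho(y(s))\,ds,$$
so that $y(t)\le v(t)$ for every $t\in[0,T]$ and, since the integral term is non-negative and $c>0$, also $v(t)\ge c>0$ for all $t$. Because $y$ is bounded and $\rho$ is continuous, $\rho(y(\cdot))$ is bounded; as $K$ is integrable, the integrand $K\rho(y(\cdot))$ is integrable, so $v$ is absolutely continuous with $v'(t)=-K(t)\rho(y(t))$ for a.e.~$t$. Using $y(t)\le v(t)$ together with the monotonicity of $\rho$ and $K\ge0$, I obtain the differential inequality $v'(t)=-K(t)\rho(y(t))\ge -K(t)\rho(v(t))$ a.e.

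Next I would pass to the $G$-scale. Since $y$, and hence $v$, is bounded, $v$ takes values in a compact subinterval of $]0,\infty[$ on which $\rho$ is continuous and strictly positive; therefore $G(x)=\int_1^x\frac{dr}{\rho(r)}$ is $C^1$ there with $G'=1/\rho>0$, and the composition $G\circ v$ is absolutely continuous. By the chain rule and the previous inequality,
$$\frac{d}{dt}G(v(t)) = \frac{v'(t)}{\rho(v(t))} \ge -K(t) \quad \text{for a.e. } t.$$
Integrating this from $t$ to $T$ and using $v(T)=c$ yields $G(v(T))-G(v(t))\ge -\int_t^T K(s)\,ds$, that is, $G(v(t))\le G(c)+\int_t^T K(s)\,ds$.

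Because $\rho>0$ on $]0,\infty[$, the map $G$ is strictly increasing, so $G^{-1}$ is increasing on its domain; applying it to the last inequality—legitimate precisely when $G(c)+\int_t^T K(s)\,ds\in\mathrm{dom}(G^{-1})$—gives $v(t)\le G^{-1}\bigl(G(c)+\int_t^T K(s)\,ds\bigr)$, and combining with $y\le v$ produces the claimed bound. For the special case $\rho(r)=r$ one has $G(x)=\log x$ and $G^{-1}(u)=e^{u}$, whence $y(t)\le c\,\exp\!\bigl(\int_t^T K(s)\,ds\bigr)$, recovering \eqref{gronwall-backwards}.

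The genuinely delicate points are bookkeeping rather than conceptual: one must secure the strict positivity $v\ge c>0$ (which keeps $1/\rho(v)$ finite and legitimizes the $G$-substitution) and the absolute continuity of $G\circ v$, and one must respect the domain restriction on $G^{-1}$, since for slowly growing $\rho$ the function $G$ need not be surjective onto $\R$. No comparison theorem beyond the monotone-ODE argument above is needed; relative to the forward statement in \cite{maobook}, the backward orientation only reverses the limits of integration.
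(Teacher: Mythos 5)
Your argument is correct and is exactly the standard Bihari--LaSalle proof (majorant $v$, differential inequality $v'\ge -K\rho(v)$, substitution through $G$) adapted to the backward orientation, which is precisely what the paper intends when it states that the proof is ``analogous to that in \cite{maobook}.'' The points you flag as delicate --- strict positivity $v\ge c>0$, absolute continuity of $G\circ v$, and the domain restriction on $G^{-1}$ --- are handled correctly, so nothing further is needed.
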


{\bf The  Jankov--von Neumann theorem.}
If $X$ and $Y$ are sets and $P\subseteq  X \times Y,$  then  $P^*\subseteq P$ is called a {\it uniformization} of $P$ if and only if $P^*$ is the graph of a function  
$f : {\rm proj}_X(P) \to Y,$ i.e.,  $P^*= \{(x,f(x)):  x \in {\rm proj}_X(P)\}.$ 
Such a function $f$ is called  a {\it uniformizing function} for $P$. 
Let $ \Sigma_1^1(X)$ denote the class of analytic  subsets of $X.$
The following theorem can be found, for example, in \cite[Theorem 18.1]{Kechris}.

\begin{thm}[Jankov--von Neumann theorem]  \label{JvN} Assume that $X$ and $Y$ are standard Borel spaces and $P \subseteq  X \times Y$ is an analytic set.
Then, $P$ has a uniformizing function  that is  $\sigma(\Sigma_1^1(X))$- measurable.

\end{thm}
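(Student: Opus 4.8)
The statement is a classical result of descriptive set theory, and my plan follows the ``leftmost branch'' selection argument, in the spirit of \cite[Theorem 18.1]{Kechris}. The strategy is first to put the problem into a canonical form and then to select, measurably in $x$, the lexicographically least witness. As a first reduction I would pass to the case $Y=\mathcal{N}:=\mathbb{N}^{\mathbb{N}}$, the Baire space. Since $Y$ is standard Borel, fix a Borel isomorphism $\iota$ of $Y$ onto a Borel set $B\subseteq\mathcal{N}$; then $P':=(\mathrm{id}_X\times\iota)(P)\subseteq X\times\mathcal{N}$ is again analytic with $\mathrm{proj}_X(P')=\mathrm{proj}_X(P)$, and if $\psi'$ is a $\sigma(\Sigma_1^1(X))$-measurable uniformizing function for $P'$ then $\iota^{-1}\circ\psi'$ is one for $P$, because composing with the Borel map $\iota^{-1}$ preserves $\sigma(\Sigma_1^1(X))$-measurability. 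So assume $Y=\mathcal{N}$. The second reduction absorbs the analytic witness: writing $P=\{(x,y):\exists w\in\mathcal{N},\,(x,y,w)\in F\}$ with $F\subseteq X\times\mathcal{N}\times\mathcal{N}$ closed, and fixing a homeomorphism $u\mapsto(y(u),w(u))$ of $\mathcal{N}$ onto $\mathcal{N}\times\mathcal{N}$, the set $G:=\{(x,u):(x,y(u),w(u))\in F\}$ is closed in $X\times\mathcal{N}$ with $\mathrm{proj}_X(G)=\mathrm{proj}_X(P)=:D$. If $\psi$ uniformizes $G$, then $x\mapsto y(\psi(x))$ uniformizes $P$ and stays $\sigma(\Sigma_1^1(X))$-measurable since $y(\cdot)$ is continuous. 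Hence it suffices to uniformize a \emph{closed} set $G$.

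For each $x\in D$ the section $G_x:=\{u:(x,u)\in G\}$ is nonempty and closed in $\mathcal{N}$, so it possesses a lexicographically least element, and I set $\psi(x):=\min_{\mathrm{lex}}G_x$. Concretely I would build $\psi(x)$ coordinatewise: having fixed $s:=\psi(x)\!\upharpoonright\!k$, let $\psi(x)(k)$ be the least $n$ with $G_x\cap N_{s^\frown n}\neq\emptyset$, where $N_t$ is the basic clopen set of extensions of a finite sequence $t$. The point $\psi(x)$ is the limit of its initial segments, and $\psi(x)\in G_x$ follows by closedness: choosing $u_k\in G_x\cap N_{\psi(x)\upharpoonright k}$ gives $u_k\to\psi(x)$, so the limit lies in the closed set $G_x$. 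Thus $\psi$ is a genuine uniformizing function for $G$ over $D$, and $D=\mathrm{proj}_X(G)\in\Sigma_1^1(X)$.

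The crux is the $\sigma(\Sigma_1^1(X))$-measurability of $\psi$. For a finite sequence $t$ the set
$$A_t:=\{x:G_x\cap N_t\neq\emptyset\}=\mathrm{proj}_X\!\big(G\cap(X\times N_t)\big)$$
is the projection of a closed (hence Borel) set, so $A_t\in\Sigma_1^1(X)$. For $|s|=k$ the preimage of the generating clopen set $N_s$ is exactly the set of $x$ at which every digit $s(j)$ was the least admissible choice, namely
$$\psi^{-1}(N_s)=\bigcap_{j=0}^{k-1}\Big(A_{s\upharpoonright(j+1)}\cap\bigcap_{m<s(j)}\big(X\setminus A_{(s\upharpoonright j)^\frown m}\big)\Big),$$
a finite Boolean combination of analytic sets, hence an element of $\sigma(\Sigma_1^1(X))$; since the sets $N_s$ generate the Borel structure of $\mathcal{N}$, $\psi$ is $\sigma(\Sigma_1^1(X))$-measurable. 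The main obstacle is precisely this bookkeeping step: one must check that the recursive ``least admissible digit'' prescription is captured \emph{exactly} by the displayed combination of the projection sets $A_t$, and — crucially — that taking complements $X\setminus A_t$ is legitimate because $\sigma(\Sigma_1^1(X))$, unlike the class $\Sigma_1^1(X)$ of analytic sets itself, is closed under complementation. Granting this, $x\mapsto y(\psi(x))$ is the desired $\sigma(\Sigma_1^1(X))$-measurable uniformizing function, which proves the theorem.
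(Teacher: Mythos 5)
The paper does not prove this statement; it quotes it as a known classical result and refers to \cite[Theorem 18.1]{Kechris}, and your argument is exactly the standard proof behind that citation: reduction to a closed subset of $X\times\mathbb{N}^{\mathbb{N}}$ by absorbing the analytic witness, leftmost-branch selection on the closed sections, and $\sigma(\Sigma_1^1(X))$-measurability via finite Boolean combinations of the analytic sets $A_t=\mathrm{proj}_X\big(G\cap(X\times N_t)\big)$. Your proposal is correct, including the two points that matter---closedness of the sections guarantees that the greedy digit-by-digit limit belongs to the section, and complementation is legitimate because it is performed in the generated $\sigma$-algebra $\sigma(\Sigma_1^1(X))$ rather than in $\Sigma_1^1(X)$ itself---so it coincides with the approach of the paper's cited source.
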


 \bibliographystyle{plain}

\end{document}